\documentclass[reqno,12pt]{article}

\usepackage[english, russian]{babel}
\usepackage{a4wide}
\usepackage{amsmath,stmaryrd} 
\usepackage{amssymb}
\usepackage{amsthm}
\usepackage[utf8]{inputenc} 
\usepackage{graphicx} 
\usepackage{xcolor}
\usepackage[normalem]{ulem}

\numberwithin{equation}{section}

\newtheorem{theo}{Theorem}[section]
\newtheorem{conj}[theo]{Conjecture}
\newtheorem{coro}[theo]{Corollary}
\newtheorem{prop}[theo]{Proposition}

\newtheorem{lem}[theo]{Lemma}
\newtheorem{defi}[theo]{Definition}

\theoremstyle{remark}
\newtheorem{Remark}{Remark}

\newtheorem*{Remark*}{Remark}
\newtheorem*{Remarks*}{Remarks}

\makeatletter
\newcommand*{\house}[1]{%
 \mathord{%
 \mathpalette\@house{#1}%
 }%
}
\newcommand*{\@house}[2]{%
 \dimen@=\fontdimen8 %
 \ifx#1\scriptscriptstyle\scriptscriptfont
 \else\ifx#1\scriptstyle\scriptfont
 \else\textfont\fi\fi
 3 %
 \sbox0{%
 $#1%
 \vrule width\dimen@\relax
 \overline{%
 \kern2\dimen@
 \begingroup 
 #2%
 \endgroup
 \kern2\dimen@
 }%
 \vrule width\dimen@\relax
 \mathsurround=1.5\dimen@ 
 $%
 }%
 \ht0=\dimexpr\ht0-\dimen@\relax
 \dp0=\dimexpr\dp0+2\dimen@\relax
 \vbox{%
 \kern\dimen@ 
 \copy0 %
 }%
}

\newcommand{\N}{\mathbb{N}}
\newcommand{\Z}{\mathbb{Z}}
\newcommand{\Q}{\mathbb{Q}}
\newcommand{\R}{\mathbb{R}}
\renewcommand{\C}{\mathbb{C}}

\newcommand{\E}{\EE}
\newcommand{\Qbar}{\overline{\mathbb Q}}

\newcommand{\eps}{\varepsilon}

\newcommand{\calE}{\mathcal E}
\newcommand{\calP}{\mathcal P}
\newcommand{\calR}{\mathcal R}

\newcommand{\NGA}{{\rm NGA}}

\newcommand{\EE}{{\bf E}}
\newcommand{\DD}{{\bf D}}
\newcommand{\VV}{{\bf V}}
\newcommand{\GG}{{\bf G}}

\newcommand{\al}{\alpha}
\newcommand{\fgo}{\mathfrak{f}} 
\newcommand{\tantie}{\foreignlanguage{russian}{\CYREREV}}

\newcommand{\antie}{\text{\tantie}}

\newcommand{\Stheta}{{\mathcal S}{_\theta}}

\begin{document}

 \selectlanguage{english}

\title{The ring of values of $E$-functions is not a field}
\date\today
\author{S. Fischler and T. Rivoal}
\maketitle

\begin{abstract} Let $\E$ be the ring of values taken at algebraic points by $E$-functions. The exponential function and the Bessel function $J_0$ are classical examples of $E$-functions. We prove that $\E$ is not a field. More precisely, we prove that it is neither noetherian nor factorial, and construct non-units of $\E$ explicity, including $e+1$, $J_0(1)$, and ${}_1F_1(a;b;1)$ for rational parameters $a$ and $b$ such that $0<a<1$ and $0<b-a<1$.
\end{abstract}

\section{Introduction}

Siegel \cite{siegel} introduced in 1929 the class of $E$-functions. A power series $f(x)=\sum_{n=0}^\infty \frac{a_n}{n!}x^n$ is an $E$-function if it is solution of a homogeneous linear differential equation with coefficients in $\Qbar(x)$ and the coefficients $a_n$ are algebraic numbers, such that there exists a constant $C>0$ with the following properties, for any $n\geq 0$:
\begin{itemize}
 \item[$(i)$] All Galois conjugates of $a_n$ have modulus less that $C^{n+1}$.
 \item[$(ii)$] $a_0$, \ldots, $a_n$ have a positive common denominator less that $C^{n+1}$.
\end{itemize}
In this setting, $\sum_{n=0}^\infty a_n x^n$ is called a $G$-function, and $\sum_{n=0}^\infty n! a_n x^n$ is called an \antie-function (or an arithmetic Gevrey series of order 1). 

\bigskip

$E$-functions are entire; here and throughout this paper, we see $\Qbar$ as embedded in $\C$.
Examples of $E$-functions include the (normalized) Bessel function $x^{-\alpha}\Gamma(\alpha+1)J_\alpha(x)=\sum_{n=0}^\infty (-1)^n (x/2)^{2n}/(n!(\alpha+1)_n)$ for $\alpha\in\Q$ and the $E$-hypergeometric series ${}_pF_q[x^{q-p+1}]$ with rational parameters, when $q\geq p \geq 1$. We recall that hypergeometric series are defined as follows:
$$
{}_pF_q[a_1, \ldots, a_p;b_1,\ldots, b_q;x]:=\sum_{n=0}^\infty \frac{(a_1)_n\cdots (a_p)_n}{n!(b_1)_n\cdots (b_q)_n}x^n
$$
where $p,q$ are non-negative integers, $a_1, \ldots, a_p\in \mathbb C$, $b_1, \ldots, b_q\in \mathbb C\setminus \mathbb{Z}_{\le 0}$, and $(\alpha)_n:=\alpha(\alpha+1)\cdots (\alpha+n-1)$ is Pochhammer's symbol.

$E$-functions appear in expressions of the form $\int_\sigma e^{-zf}\omega$ where $f$ is a regular function on an algebraic variety $X$ defined over $\Qbar$, $\sigma$ is a rapid decay cycle on $X(\C)$ and $\omega$ is an algebraic differential form on $X$; moreover any $E$-function is conjectured to appear in this way 
(see \cite[\S 1.6]{Fresannotes}).

\bigskip

The set of values $f(x_0)$, where $f$ is an $E$-function and $x_0$ is an algebraic number, is a ring denoted by $\EE$ and studied in \cite{ateo} (see also \cite{XUPS}); in this definition we may assume $x_0=1$ since $f(x_0x)$ is again an $E$-function. Values of $E$-functions are of special interest because of their conjectural relation with exponential periods \cite{FJ}, in the same way as values of $G$-functions with periods (see \S \ref{secopen} below).

\bigskip

A difficult problem is to prove that a given number {\em does not} belong to $\EE$. A way to do this is to prove that it is extremely well approximated by rationals \cite{eliouville,muegaledeux}. But when it comes to numbers appearing naturally in analysis, conjecturally this method cannot be applied. In this paper we provide a new way to prove that a given element does not belong to $\EE$. This enables us to prove that $\EE$ is not a field, a folklore conjecture. More precisely, our main result is the following.

\begin{theo} \label{th1}
The ring $\EE$ is not a field. More precisely, it is neither noetherian nor factorial, and the following elements of $\EE$ are not units:
\begin{itemize}
\item[$\bullet$] $e+1$,
\item[$\bullet$] $J_0(1)$,
\item[$\bullet$] $ {}_1F_1[a;b;1]$ with rationals $a$, $b$ such that $0<a<1$ and $0<b-a<1$.
\end{itemize}
\end{theo}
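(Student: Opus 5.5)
The plan is to show that a unit of $\EE$ would force an \emph{entire} $E$-function identity, and that such an identity cannot exist when the function naturally attached to the number has infinitely many zeros. Suppose $\xi=f(1)$ is a unit, so $1/\xi=g(1)$ for some $E$-function $g$; here $f(x)=e^x+1$, $J_0(x)$ or ${}_1F_1[a;b;x]$.

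\textbf{Step 1: lifting.} I would consider the vector $Y(x)$ made of $e^x$ (resp. $J_0,J_0'$, resp. $F,F'$ with $F={}_1F_1[a;b;x]$) together with $g,g',\dots,g^{(m)}$. This is a solution of a first order system $Y'=AY$ with $A\in\Qbar(x)$. By André's result we may choose the system so that its only finite singularity is $0$, so $x=1$ is a regular point. Beukers' lifting theorem (the refined Siegel--Shidlovskii theorem) then says that every polynomial relation over $\Qbar$ among the coordinates of $Y(1)$ is the specialization at $x=1$ of a relation over $\Qbar[x]$ among the coordinates of $Y(x)$. Since $f(1)g(1)=1$, this yields a functional relation of the form
\begin{equation*}
\sum_{j} \bigl(p_j(x)+q_j(x)\,\varphi_j(x)\bigr)\, g^{(j)}(x) \;=\; r(x)+\sum_k s_k(x)\,\varphi_k(x),
\end{equation*}
where the $\varphi$'s are $f$ and its derivatives and the coefficients are polynomials, not all zero. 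Evaluated at $x=1$, this relation reduces to $f(1)g(1)=1$.

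\textbf{Step 2: analytic contradiction.} The next task is to exploit this identity. For $e^x+1$, the transcendence of $e^x$ over $\C(x)$ together with a degree argument in $e^x$ should reduce the relation to a linear differential equation for $g$. Its coefficients vanish at the zeros $x=(2k+1)i\pi$ of $e^x+1$, for infinitely many $k$. At those points the relation forces $r(x_k)+\dots=0$ at infinitely many points where a nonzero polynomial cannot vanish. Alternatively, growth arguments show that any entire solution $g$ must behave like $1/(e^x+1)$, which has poles; this contradicts $g$ being entire.

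For $J_0$ and ${}_1F_1[a;b;x]$, the analogous inputs are:
\begin{itemize}
\item the algebraic independence of $f$ and $f'$ over $\C(x)$, which gives irreducibility of the order-two operator;
\item the existence of infinitely many zeros of $f$ that are not zeros of $f'$.
\end{itemize}
For $J_0$ both facts are classical. For ${}_1F_1$, the hypotheses $0<a<1$ and $0<b-a<1$ are presumably exactly what makes $a,b-a\notin\Z$, hence irreducibility, and what guarantees infinitely many zeros through the asymptotic expansion mixing $x^{a-b}e^x$ and $(-x)^{-a}$.

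\textbf{Step 3: ring-theoretic consequences.} I would apply the same criterion to $e^{1/2^n}+1$, which is not a unit by the argument of Step 2. The identity $e^{1/2^{n-1}}-1=(e^{1/2^n}-1)(e^{1/2^n}+1)$ then shows that the principal ideals $(e^{1/2^n}-1)$ form a strictly ascending chain, so $\EE$ is not noetherian. The same non-divisibility mechanism shows that $e-1$ has no factorization into irreducibles: every factor splits again. Hence $\EE$ is not factorial.

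\textbf{Main obstacle.} The hard step is Step 1 combined with the first half of Step 2. The lifted relation obtained from Beukers' theorem involves all derivatives of the unknown $g$, and the difficulty is to pass from it to a usable identity relating $g$ to $1/f$. This requires controlling the minimal differential equation of $g$ and ruling out degenerate lifts, such as relations that hold for reasons unrelated to $f(1)g(1)=1$. I would first try to choose $Y$ so that the Zariski closure of its orbit splits as a product, so that the lifted relation can be taken linear in $g$.
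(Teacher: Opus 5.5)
Your proposal has a genuine gap in Step 2, and the gap is structural rather than technical. Whatever system $Y$ you choose, the lift of $f(1)g(1)=1$ given by Beukers' theorem amounts to an identity $f(x)g(x)=1+(x-1)k(x)$ with $k$ an $E$-function. This is exactly the form of Beukers' theorem the paper uses.

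As an analytic statement, this identity carries no information. Take any entire $g$ with $g(1)=1/f(1)$, for example the constant $1/(e+1)$, which is even holonomic. Then $k=(fg-1)/(x-1)$ is entire and holonomic. So no argument based only on zeros, growth or differential equations can give a contradiction; the arithmetic of $g$ has to enter, and in your plan it never does.

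Concretely, the steps you sketch do not go through:
\begin{itemize}
\item At a zero $x_k$ of $f$, the identity only gives $k(x_k)=-1/(x_k-1)$. In your linear form it gives a relation among the unknown values $g^{(j)}(x_k)$, not the vanishing of a fixed nonzero polynomial at infinitely many points.
\item Nothing forces $g$ to behave like $1/f$.
\item The ``degree argument in $e^x$'' would need $e^x$ to be algebraically independent of $g,g',\dots$ over $\C(x)$, which you cannot know. With the system $(1,e^x)\otimes(1,g,\dots,g^{(m-1)})$ the lift reads $A+e^xB=0$, with $E$-functions $A,B$ and $B(1)=1/(e+1)$. That is the original problem again.
\end{itemize}

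More fundamentally, being a unit depends only on the number $f(1)$. So any valid criterion must be invariant under $f\mapsto f+(x-1)h$, and the zero set is not. For instance, $e^x+(x-1)(e^{2x}+1)$ is an $E$-function with infinitely many zeros (it is a non-monomial exponential polynomial), yet its value at $1$ is the unit $e$.

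What is missing is an invariant of the \emph{number} that encodes the structure of $E$-functions at infinity. The paper uses André's decomposition $f(x)=\sum_\rho f_\rho(x)e^{\rho x}$ in a large sector bisected by $\theta$, with $|\theta|<\pi/2$. This decomposition is unique by $1$-summability, and the $f_\rho$ are holomorphic at $x=1$.
\begin{itemize}
\item If two $E$-functions $f_1,f_2$ take the value $\xi$ at $1$, Beukers gives $f_1=f_2+(x-1)h$. Uniqueness of the decomposition then gives $f_{1,\rho}(1)=f_{2,\rho}(1)$.
\item Hence $\Phi_\theta(\xi)=\sum_\rho f_\rho(1)e^{\rho x}$ depends only on $\xi$.
\item Compatibility of asymptotic expansions with sums and products makes $\Phi_\theta$ an injective ring morphism into the exponential polynomials $\mathcal{R}$.
\item The units of $\mathcal{R}$ are the $\lambda e^{\rho x}$, so a unit of $\EE$ has exactly one $\rho$ with $f_\rho(1)\neq0$.
\end{itemize}

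This criterion handles the examples as follows:
\begin{itemize}
\item It disposes of $e+1$, $e^{1/2^n}+1$ and $e^{1/n}-e^{2ik\pi/n}$ at once.
\item For $J_0(1)$, take $\theta=0$. The numbers $f_{\pm i}(1)$ are complex conjugates, so they cannot vanish, since $J_0(1)\neq0$.
\item For ${}_1F_1[a;b;1]$, one computes $f_0(1)$ and $f_1(1)$ as explicit Borel--Laplace integrals and checks that both are nonzero. The hypotheses $0<a<1$, $0<b-a<1$ serve to make these Euler-type integrals absolutely convergent; they play no role through irreducibility or counting zeros.
\end{itemize}

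Once these non-units are established, your Step 3 is the paper's argument: the strict chain of principal ideals $(e^{1/2^n}-1)$, and $e-1$ written as a product of arbitrarily many non-units, which is impossible in a factorial ring.
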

The argument 1 at which the above examples are evaluated plays no special role. The important point is that we introduce a new tool that enables us to determine explicit elements of $\E\setminus \E^\times$.

\bigskip

As noticed in \cite[\S 5]{ezeros}, knowing that $\EE$ is not a field has a consequence on the structure of the ring $\calE$ of $E$-functions: the ideal $I$ of $\calE$ generated by $f(x)=e^x+1$ and $g(x)=x-1$ is not principal (and in this example, $f$ can be replaced with any $E$-function such that $f(1)\not\in\EE^\times$). Indeed, if $I$ is generated by an $E$-function $h$, then $h$ divides $x-1$ so that $h$ is a unit or $h(x)=(x-1)u(x)$ for some unit $u$ of $\calE$. The latter is impossible since $h$ divides $f$; therefore $1\in I$ and 
there exist $f_1$, $g_1$ such that $
ff_1+gg_1=1$. Taking $x=1$ we obtain $1/(e+1)=f_1(1)\in\EE$, which is a contradiction. In particular, even though $e^x+1$ and $ x-1$ are coprime (see \cite{ezeros}), there is no Bezout relation between them. 

\bigskip

It would be very interesting to study this ring $\EE$ further, for instance factorization properties. A few hints in this direction are given at the end of this text. But the first step would be to determine exactly its units. In this respect, the following conjecture is natural (see \cite[Problème 6]{XUPS} and \cite[Theorem 5]{RiRopreprint}):

\begin{conj} \label{conj:1} Units of $\EE$ are exactly the $ \alpha e^{\beta }$ with $\alpha,\beta \in\Qbar$ and $\alpha\neq0$.
\end{conj}

We prove the following result in this direction.
\begin{theo} \label{th2} 
Let $\xi = \sum_{j=1}^p \alpha_j e^{\beta_j}$ with $\alpha_1,\ldots,\alpha_p, \beta_1,\ldots,\beta_p\in\Qbar$. Then $\xi$ is a unit of $\EE$ if, and only if, $\xi$ is of the form $ \alpha e^{\beta }$ with $\alpha,\beta \in\Qbar$ and $\alpha\neq0$.
\end{theo}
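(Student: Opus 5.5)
The ``if'' direction is immediate: if $\xi=\alpha e^{\beta}$ with $\alpha\neq 0$, then $\xi^{-1}=\alpha^{-1}e^{-\beta}$ is the value at $x=1$ of the $E$-function $\alpha^{-1}e^{-\beta x}$, so $\xi\in\EE^\times$. For the converse, I will start from a relation $\xi\eta=1$ with $\eta=g(1)\in\EE$, where $g$ is an $E$-function. The plan is to lift this relation to a functional identity, using Beukers' refined form of the Siegel--Shidlovskii theorem, and then to use the classical theory of zeros of exponential polynomials.

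\textbf{Step 1 (the differential system).} I embed $g$ in a vector $Y=(g_1=g,g_2,\ldots,g_m)^t$ of $E$-functions satisfying a first-order system $Y'=AY$ with $A\in M_m(\Qbar(x))$. I then adjoin the functions $e^{\beta_1x},\ldots,e^{\beta_px}$, which gives a larger system whose solution vector $Z$ consists of $E$-functions. Using a point shift of the form $f(x)\mapsto f(\lambda x)$ if necessary, I arrange that $x=1$ is not a singularity of the system. By Beukers' theorem, every polynomial relation over $\Qbar$ among the values $Z(1)$ is the specialization at $x=1$ of a relation $P(x,Z(x))=0$ with $P\in\Qbar[x][Z]$.

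\textbf{Step 2 (shape of the lifted relation).} The relation $\bigl(\sum_j\alpha_jZ_{e,j}\bigr)Z_{g,1}-1=0$ holds at $x=1$. Its lift $P$ can be chosen bi-homogeneous of bidegree $(\le1,\le1)$ in the two blocks of variables, up to the constant term. The reason is that Beukers' theorem is proved degree by degree via the ideal of relations, so I can project onto the relevant graded piece. This should yield an identity
\[
\Bigl(\sum_{j=1}^p a_j(x)e^{\beta_jx}\Bigr)G(x)=c(x),
\]
where $a_j,c\in\Qbar(x)$, $a_j(1)=\alpha_j$, $c(1)=1$, and $G$ is a $\Qbar(x)$-linear combination of the $g_i$ with $G(1)=\eta$. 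Clearing denominators, $E(x):=\sum_j a_j(x)e^{\beta_jx}$ becomes an exponential polynomial with polynomial coefficients such that $E\cdot G$ is a rational function and $G$ is entire up to finitely many poles. Hence $E$ has only finitely many zeros. I expect this step to be the main obstacle. One has to check carefully that the lifted relation really has this product form, and does not merely involve cross terms such as $e^{\beta_jx}g_i(x)$ for several $i$ that cannot be recombined. If this fails, I would first try grouping the terms by distinct $\beta_j$, and using the linear independence of the $e^{\beta x}$ over the field generated by the $g_i$, which holds after a suitable reduction of the system.

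\textbf{Step 3 (conclusion).} I assume the $\beta_j$ are distinct, which costs nothing since terms with equal $\beta_j$ can be merged. An exponential polynomial $\sum_j a_j(x)e^{\beta_jx}$ with at least two nonzero terms, having distinct frequencies and polynomial coefficients, has infinitely many zeros, by the Pólya--Ritt theory of exponential sums. Therefore only one term survives: $E(x)=a(x)e^{\beta x}$. Evaluating at $x=1$ gives $\xi=a(1)e^{\beta}$ with $a(1)\in\Qbar$, and $a(1)\neq0$ since $\xi\eta=1$. This is exactly the required form $\xi=\alpha e^{\beta}$ with $\alpha,\beta\in\Qbar$ and $\alpha\neq0$.
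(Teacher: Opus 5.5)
Your ``if'' direction and Step 3 are fine. Step 2, which you flag yourself, is a genuine gap, and it is where all the content of the theorem sits.

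The justification you offer does not work. Beukers' theorem lifts relations that are homogeneous for the \emph{total} degree, after adjoining the constant function $1$. But the ideal of algebraic relations over $\Qbar(x)$ among $1$, the $e^{\beta_jx}$ and the $g_i$ is not bigraded with respect to your two blocks. So truncating a lift to bidegree $(\le 1,\le 1)$ need not give a relation.

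Nothing prevents $g$ from having exponential components: when $\xi=\alpha e^{\beta}$ is a unit, $g$ may be $\alpha^{-1}e^{-\beta x}$ plus $(x-1)$ times any $E$-function. Here is a concrete case. Take $\xi=e^{\beta_1}$ with $\beta_1\neq 0$, $g=e^{-\beta_1x}+(x-1)e^{\beta_1x}$, $Z_{g,1}=g$ and $Z_{g,2}=g'$. Then $e^{\beta_1x}=(\beta_1 g+g')/(1+2\beta_1(x-1))$. Hence $Z_{e,1}^2-Z_{e,1}(\beta_1Z_{g,1}+Z_{g,2})/(1+2\beta_1(x-1))$ is a relation mixing bidegrees $(2,0)$ and $(1,1)$. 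Adding $(x-1)$ times it to a good lift gives another valid lift whose truncation is not a relation.

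Moreover, a lift that is bilinear in the two blocks is still only an identity $\sum_j e^{\beta_jx}G_j(x)=c(x)$. Here the $G_j$ lie in the $\Qbar(x)$-span of the $g_i$, satisfy $G_j(1)=\alpha_j\eta$, and may all be different. This is exactly what the linear form of Beukers' theorem, applied to the $E$-functions $e^{\beta_jx}g_i$, honestly gives. Nothing forces $G_j=a_jG$. Since the $G_j$ may themselves contain exponential factors, no zero-counting argument for exponential polynomials applies. Your fallback (linear independence of the $e^{\beta x}$ over the field generated by the $g_i$) fails for the same reason.

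The missing idea is a canonical splitting of the \emph{value} $\eta=g(1)$ into exponential components. The paper gets it from the decomposition $g(x)=\sum_\rho g_\rho(x)e^{\rho x}$ given by the asymptotic expansion of $g$ in a large sector bisected by a direction $\theta$ with $|\theta|<\pi/2$. It then sets $\Phi_\theta(g(1))=\sum_\rho g_\rho(1)e^{\rho x}$.

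Beukers' theorem enters only to show this is independent of the choice of $g$. If $\tilde g(1)=g(1)$, then $g-\tilde g=(x-1)h$ with $h$ an $E$-function, so $g_\rho(1)=\tilde g_\rho(1)$.

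Uniqueness of the expansions makes $\Phi_\theta$ a ring morphism $\EE\to\mathcal R$ with $\Phi_\theta(\sum_j\alpha_je^{\beta_j})=\sum_j\alpha_je^{\beta_jx}$. So $\xi\eta=1$ becomes $\bigl(\sum_j\alpha_je^{\beta_jx}\bigr)\Phi_\theta(\eta)=1$ in $\mathcal R$. This is the product identity your Step 2 was after. The reasoning of your Step 3 then concludes, since the units of $\mathcal R$ are the $\lambda e^{\rho x}$.

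A minor point: rescaling $x\mapsto\lambda x$ only moves the evaluation point. To make $1$ a non-singular point, use André's result that every $E$-function is annihilated by an $E$-operator whose only finite singularity is $0$.
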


In particular, $e+1$ is not a unit of $E$ (as mentioned above).

In other words, Theorem~\ref{th2} means that $\frac1{\sum_{j=1}^p \alpha_j e^{\beta_j}}$ does not belong to $\EE$, except in the obvious case where the denominator is of the form $ \alpha e^{\beta }$. We will prove more generally that if a number of the form 
$$
\frac{\sum_{j=1}^p \alpha_j e^{\beta_j}}{\sum_{j=1}^{p'} \alpha'_j e^{\beta'_j}}, \quad \alpha_j, \alpha_j', \beta_j, \beta_j' \in \Qbar,
$$ 
belongs to $\EE$, then the denominator ``divides'' the numerator: this number can be written as $\sum_{j=1}^{p''} \alpha''_j e^{\beta''_j}$ with $\alpha_j'', \beta_j''\in \Qbar$ (see Corollary~\ref{coroth3} in \S \ref{secmorph}).

\bigskip

The main tool used in this paper is the asymptotic expansion of an $E$-function $f$ in a large sector bisected by a fixed non-anti-Stokes direction $\theta$, with $-\pi/2<\theta<\pi/2$; by {\em large} we mean that its opening is greater than $\pi$. The important point in our approach is that it provides a unique decomposition of $f$ as a finite sum 
\begin{equation}\label{eq:decompf}
f(x) = \sum_{\rho\in\Qbar} f_\rho(x) e^{\rho x}
\end{equation}
where, for each $\rho\in\Qbar$, $f_\rho$ is a function holomorphic on an open subset of $\C$ containing the point $x=1$ (we refer to Theorem~\ref{thdcp} for the properties of this decomposition). 

We prove that each {\em value} $\xi\in\EE$ admits a similar decomposition. Indeed, we attach to $\xi$ an exponential polynomial $\Phi_\theta(\xi)$ defined by
$$\Phi_\theta(\xi)(x)= \sum_{\rho\in\Qbar} f_\rho(1)e^{\rho x}
$$
if $\xi=f(1)$ for some $E$-function $f$ of which $\theta$ is not an anti-Stokes direction and $f$ has a decomposition as in \eqref{eq:decompf}.
Taking $x=1$ gives a decomposition of $\xi$:
$$ \xi = \sum_{\rho\in\Qbar} f_\rho(1)e^{\rho }.
$$
The crucial point here is that this decomposition (and, more generally, $\Phi_\theta(\xi)$) depends only on $\xi$ and $\theta$, not on the choice of $f$. This follows from Beukers' theorem \cite{beukers} (proved by André \cite{andre} in the case of rational coefficients): if $h$ is an $E$-function that vanishes at 1, then $h(x)/(x-1)$ is an $E$-function. 

We prove that $\Phi_\theta$ is an injective ring morphism (see Proposition~\ref{propfonda} for details), and this enables us to deduce the results announced above.

\bigskip

Siegel's definition of $E$-functions is in fact more general that the above one, with both upper bounds $C^{n+1}$ replaced by $n!^{\varepsilon}$ for all $\varepsilon>0$ provided $n\ge N(\varepsilon)$. For such $E$-functions, we can extend the definition of the set $\E$ and it is still a ring, the units of which are also conjectured to be of the form given in Conjecture~\ref{conj:1}. Then Theorem~\ref{th1} holds {\em verbatim} in this more general situation with the same proof, {\em mutatis mutandis} so do all the results stated and proved in \S \ref{secmorph}. Indeed, we shall use the existence of an $E$-operator with certain ``good'' properties, Beukers' theorem, and properties of asymptotic expansions of $E$-functions described in Theorem~\ref{thdcp}: these properties also hold for $E$-functions in Siegel's original sense by results of Andr\'e \cite{andreasens} and Lepetit \cite{lepetit}.

\bigskip

The structure of this paper is as follows. In \S \ref{secasy} we recall the properties of asymptotic expansions of $E$-functions, and sum up in Theorem~\ref{thdcp} the most useful ones in our approach. Then we construct $\Phi_\theta$ in \S \ref{secmorph}, and use it to prove Theorem~\ref{th2}. 
We make explicit computations in \S \ref{secexplicite}: using integral formulas for the Borel-Laplace summation in a special case, we work out examples related to Kummer's function and Bessel's function $J_0$. We gather all ingredients in \S \ref{sec5} and 
 complete the proof of Theorem~\ref{th1}. At last, \S \ref{secopen} is devoted to open questions.

\bigskip 

\noindent {\em Statement on the use of Artificial Intelligence:} no AI tools were used to prepare this paper, except to find reference \cite{Fresannotes}.

\section{Asymptotic expansions} \label{secasy}

Let us recall the main properties of asymptotic expansions of $E$-functions in large sectors. We will sum up in Theorem~\ref{thdcp}, at the end of this section, the properties we shall use in the rest of the paper.

\bigskip

To begin with, the following definition appears in \cite[\S 4.1]{ateo} (see also \cite[p. 30]{Ramis}).

\begin{defi} \label{defiasy}
Let $\theta\in\R$, and $\Sigma\subset\C$, $S\subset\Q$, $T\subset\N$ be finite subsets. Given
complex numbers $ c_{\rho, \al,i,n}$, we write
\begin{equation} \label{eqasy1}
f(x) \approx \sum_{\rho\in\Sigma} e^{\rho x} 
 \sum_{\al\in S } \sum_{i\in T } \sum_{n=0}^{\infty} c_{\rho, \al,i,n}x^{-n-\al}(\log(1/x))^i
 \end{equation}
and say that the right hand side is the (generalized) asymptotic expansion of $f(x)$ in a large sector bisected by 
the direction $\theta$, if there exist $\eps, R, B, C > 0$ and, for any $\rho\in\Sigma$, a function $f_\rho(x)$ holomorphic on
$$U = \Big\{x\in\C, \,\,|x|\geq R, \, \, \theta-\frac{\pi}2-\eps \leq \arg(x) \leq \theta+\frac{\pi}2+\eps \Big\},
$$
such that 
$$f(x) = \sum_{\rho\in\Sigma} f_\rho(x)e^{\rho x} 
 $$
 and
$$
\Big| f_\rho(x) - \sum_{\al\in S } \sum_{i\in T } \sum_{n=0}^{N-1} c_{\rho, \al,i,n}x^{-n-\al}(\log(1/x))^i\Big| \leq C^N N! |x|^{B-N} 
$$
for any $x\in U$ and any $N\geq 1$.
\end{defi}

In other words (see \cite[\S\S 2.1 and 2.2]{Ramis}), for any $\rho\in\Sigma$, the formal power series 
\begin{equation} \label{eqasy2}
\widehat f_\rho(x)= \sum_{\al\in S } \sum_{i\in T } \sum_{n=0}^{N-1} c_{\rho, \al,i,n}x^{-n-\al}(\log(1/x))^i
 \end{equation}
 is 1-summable in the direction $\theta$ and its sum is $f_\rho(x)$. Accordingly, by a result of
 Watson (see \cite[p. 33]{Ramis}), the sum $f_\rho(x) $ is determined by $\widehat f_\rho(x)$.
 Therefore the asymptotic expansion on the right hand side of \eqref{eqasy1} determines the function $f(x)$ 
(up to analytic continuation). The converse is also true, as the following lemma shows (see \cite[Lemma 1]{ateo}).
 
 \begin{lem} \label{lemasyunique}
 A given function $f $ can have at most one asymptotic expansion in the sense of Definition~\ref{defiasy}.
\end{lem}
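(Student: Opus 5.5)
By linearity it suffices to prove the following: if
$$0 \approx \sum_{\rho\in\Sigma} e^{\rho x} \sum_{\al\in S } \sum_{i\in T } \sum_{n=0}^{\infty} c_{\rho, \al,i,n}x^{-n-\al}(\log(1/x))^i$$
is an asymptotic expansion of the zero function in a large sector bisected by $\theta$, then all coefficients $c_{\rho,\al,i,n}$ vanish. Indeed, two expansions of the same $f$ can be written over common finite sets $\Sigma,S,T$ by padding with zero coefficients. Their difference is then an expansion of $0$: take the differences of the $f_\rho$, use the triangle inequality, and enlarge the constants $C,B$. Before starting, I normalize so that the exponents $\al\in S$ are pairwise incongruent modulo $\Z$; this is harmless because terms with $\al\equiv\al'$ can be merged by reindexing $n$, at the cost of allowing finitely many negative $n$, i.e. a finite number of terms $x^{m}$ with $m$ a positive power.

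So suppose $\sum_\rho f_\rho(x)e^{\rho x}=0$ on $U$, where each $f_\rho$ is asymptotic to $\widehat f_\rho$. The key step is to separate the exponentials. Because the sector has opening $>\pi$, for any two distinct $\rho,\rho'\in\Sigma$ there is a direction $\arg x=\varphi$ inside $U$ along which $\mathrm{Re}((\rho-\rho')x)$ grows linearly in $|x|$. More precisely, I choose a direction $\varphi$ in the interior of $U$ such that the real parts $\mathrm{Re}(\rho e^{i\varphi})$, $\rho\in\Sigma$, are pairwise distinct. This is possible since the set of excluded directions is finite and the open arc of directions is nonempty. Let $\rho_0$ be the element maximizing $\mathrm{Re}(\rho e^{i\varphi})$. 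Dividing the identity by $e^{\rho_0 x}$ and letting $x=te^{i\varphi}$ with $t\to+\infty$, every other term $f_\rho e^{(\rho-\rho_0)x}$ is exponentially small, because each $f_\rho$ has at most polynomial growth $O(|x|^{B'})$ by the $N=1$ estimate. Hence $f_{\rho_0}(te^{i\varphi})$ is exponentially small, so it is $o(t^{-M})$ for every $M$.

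Next I show that this forces all coefficients of $\widehat f_{\rho_0}$ to vanish. I order the finitely many pairs $(\al,i)$ that appear at leading order by the size of $t^{-n-\al}(\log t)^i$ and argue inductively. Suppose the largest nonzero term is $c\,x^{-n-\al}(\log(1/x))^i$. All other terms, together with the remainder $C^N N!|x|^{B-N}$ for $N$ large and fixed, are $o$ of it along the ray. So $f_{\rho_0}\sim c\,x^{-n-\al}(\log(1/x))^i\neq o(t^{-M})$, a contradiction. Note that $\log(1/x)=-\log t-i\varphi$, so the powers of $\log$ give genuinely distinct orders of magnitude once expanded. The monomials $t^{-\beta}(\log t)^j$ are asymptotically independent, so the induction goes through, and $\widehat f_{\rho_0}=0$. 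By Watson's theorem, $f_{\rho_0}$ is then identically $0$ (it is $1$-summable with zero expansion in a sector of opening $>\pi$). Alternatively, I can simply remove $\rho_0$ from $\Sigma$ at this point, since only the coefficients matter. Induction on $|\Sigma|$ then finishes the proof.

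The main obstacle is the separation step, namely justifying that a single direction distinguishes all the $\rho$ while staying inside $U$, where the asymptotic bounds hold. The large opening is exactly what guarantees that the admissible set of directions is a nonempty open arc. A secondary point of care is the bookkeeping with the logarithms and the $\al$ modulo $\Z$, so that the leading monomial really dominates.
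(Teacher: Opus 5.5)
Your main line is correct and is the standard argument. The paper itself gives no proof: it only cites \cite[Lemma 1]{ateo}, right after recalling Watson's theorem that $f_\rho$ is determined by $\widehat f_\rho$. Your steps are the right ones:
\begin{itemize}
\item reduce to an expansion of $0$;
\item choose a generic ray on which one exponential $e^{\rho_0 x}$ strictly dominates, so that $f_{\rho_0}$ is exponentially small along it;
\item kill $\widehat f_{\rho_0}$ by a leading-monomial comparison;
\item use Watson to get $f_{\rho_0}\equiv 0$, and induct on $|\Sigma|$.
\end{itemize}

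Two points in your write-up are wrong.

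First, the fallback ``alternatively, I can simply remove $\rho_0$ from $\Sigma$ at this point, since only the coefficients matter'' is not valid. Dropping $\rho_0$ requires the identity $\sum_{\rho\neq\rho_0} f_\rho e^{\rho x}=0$, i.e.\ $f_{\rho_0}\equiv 0$. Knowing $\widehat f_{\rho_0}=0$ only tells you that $f_{\rho_0}$ is Gevrey-flat.

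Second, and relatedly, your closing paragraph misplaces the role of the opening $>\pi$. A direction separating the finitely many $\mathrm{Re}(\rho e^{i\varphi})$ exists in any sector of positive opening, so the separation step is not where the large opening matters. It is needed exactly in the Watson step. To see this, take the sector $|\arg x|\le \pi/2-\delta$. The bound $e^{-\mathrm{Re}\,x}\le N!\,(|x|\sin\delta)^{-N}$ shows that $e^{-x}$ has both the expansion $e^{-x}\cdot 1$ and the expansion $e^{0\cdot x}\cdot 0$ there. Your separation and leading-term steps go through verbatim in this sector: with $\rho_0=0$ they correctly give $\widehat f_0=0$. Only the passage from $\widehat f_0=0$ to $f_0\equiv 0$ fails, and removing $\rho_0$ anyway would leave the false identity $e^{-x}=0$.

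So keep the Watson route and delete the alternative; with that the proof is complete.
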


From now on, we restrict to the case where $f$ is an $E$-function. Then André \cite{andre} gave a complete picture that we state now.

For any real $\theta$ (except finitely many directions mod $2\pi$, called {\em anti-Stokes}), $f$ has an asymptotic expansion \eqref{eqasy1} in a large sector bisected by $\theta$. Given such a $\theta$, the functions $ f_\rho$ of Definition~\ref{defiasy} and the formal power series $\widehat f_\rho$ of Eq. \eqref{eqasy2} exist and are unique. Moreover André proved \cite{andre} that $\widehat f_\rho$ belongs to $\NGA_1\{1/x\}$ (see \S \ref{secopen} for the definition of this set), but this property won't be used in this paper. 

Another important property is that $f$ and the functions $e^{\rho x} f_\rho(x)$ are solutions of an $E$-operator, of which $0$ is the only possible finite singularity. In particular, the functions $f_\rho$ can be analytically continued to a large sector bisected by $\theta$, in which the asymptotic expansion of $f$ holds.
 In what follows we shall assume that $-\pi/2<\theta<\pi/2$, so that the point $x=1$ belongs to this sector and $f_\rho(1)$ can be obtained by performing 1-summation of $\widehat f_\rho(x)$ and letting $x=1$.

\bigskip
 
Finally, let us sum up the above properties \cite{andre, Ramis}. 

\begin{theo} \label{thdcp}
Let $f$ be an $E$-function, and $\theta$ be a direction which is not anti-Stokes; assume that $-\pi/2<\theta<\pi/2$.

Then there exist $\alpha>0$ and a unique family $(f_\rho)_{\rho\in\Qbar}$ of functions such that:
\begin{itemize}
\item The functions $f_\rho$ are holomorphic on the open set $D_\alpha$ defined by $x\neq 0$ and $\theta-\frac\pi2-\alpha<\arg(x) <\theta+\frac\pi2+\alpha$, and (except for finitely many $\rho$) they are identically zero.
\item For any $x\in D_\alpha$, we have $f(x) = \sum_{\rho\in\Qbar} f_\rho(x)e^{\rho x} $.
\item Each $ f_\rho(x)$ has an asymptotic expansion $\widehat f_\rho (x) \in \NGA_1\{1/x\}$, valid as $|x|\to+\infty$ in $D$.
\item If $\theta$ and $\theta'$ are such directions, and if no anti-Stokes direction lies between $\theta$ and $\theta'$, then the functions $f_\rho$ associated with $\theta$ are the same as with $\theta'$.
\end{itemize}
\end{theo}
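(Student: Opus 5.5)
The plan is to obtain Theorem~\ref{thdcp} by assembling André's results on $E$-operators with the $1$-summability theory recalled above; the only things to check are the geometric claims about the domain $D_\alpha$ and the uniqueness.

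\textbf{Existence.} Since $\theta$ is not anti-Stokes, André's theorem gives an asymptotic expansion \eqref{eqasy1} of $f$ in a large sector bisected by $\theta$, in the sense of Definition~\ref{defiasy}. This comes with finitely many $\rho\in\Sigma\subset\Qbar$, and with functions $f_\rho$ holomorphic on a set $U$ of the form $|x|\ge R$, $|\arg x-\theta|\le \pi/2+\eps$. For $\rho\notin\Sigma$ we set $f_\rho=0$. The series $\widehat f_\rho$ lies in $\NGA_1\{1/x\}$, again by André.

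\textbf{Continuation to $D_\alpha$.} This is the main point to verify. Each $e^{\rho x}f_\rho(x)$ is a solution of an $E$-operator whose only finite singularity is $0$. Hence $f_\rho$ continues analytically along any path avoiding $0$. I take $\alpha<\eps$ with $\alpha<\pi/2$. The set $D_\alpha$ is a simply connected sector not containing $0$, of opening $\pi+2\alpha<2\pi$. Therefore the continuation of $f_\rho$ from $U\cap D_\alpha$ to all of $D_\alpha$ is single-valued and holomorphic.

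The identity $f=\sum_\rho f_\rho e^{\rho x}$ holds on the open set $U\cap D_\alpha$. Both sides are holomorphic on the connected set $D_\alpha$, since $f$ is entire. By the identity theorem, the equality therefore holds on all of $D_\alpha$. Since $|\theta|<\pi/2$, the sector contains the point $x=1$. The asymptotic expansion remains valid as $|x|\to\infty$ in $D_\alpha$, because $D_\alpha\cap\{|x|\ge R\}\subset U$.

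\textbf{Uniqueness.} Suppose $(g_\rho)$ is another family with the same three properties. Then $\sum_\rho g_\rho e^{\rho x}$ is an asymptotic expansion of $f$ in the sense of Definition~\ref{defiasy}. For the required Gevrey bounds, I use the fact that $\widehat g_\rho\in\NGA_1\{1/x\}$ is $1$-summable in direction $\theta$ with sum $g_\rho$. This is how ``has asymptotic expansion'' is to be read here, namely as $1$-sum. By Lemma~\ref{lemasyunique}, the formal expansions coincide, so $\widehat g_\rho=\widehat f_\rho$. Watson's theorem then gives $g_\rho=f_\rho$ near infinity in the sector, and by analytic continuation on all of $D_\alpha$.

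\textbf{Independence of $\theta$.} Suppose no anti-Stokes direction lies between $\theta$ and $\theta'$. Then the $1$-sums of each $\widehat f_\rho$ in the directions $\theta$ and $\theta'$ coincide: singular directions of the Borel transform correspond to anti-Stokes directions, so the Laplace integrals can be rotated into each other. The formal series $\widehat f_\rho$ are the same for both directions by Lemma~\ref{lemasyunique}, since the two large sectors overlap. Hence the functions $f_\rho$ agree on the overlap of the two sectors and are analytic continuations of one another.
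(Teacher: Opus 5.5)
Your proposal is correct and takes essentially the same route as the paper. The paper gives no separate proof of Theorem~\ref{thdcp}: it presents it as a summary of André's results (existence of the expansion off anti-Stokes directions, $\widehat f_\rho\in\NGA_1\{1/x\}$, and the fact that the $e^{\rho x}f_\rho$ satisfy an $E$-operator singular only at $0$, which gives the continuation to $D_\alpha$) together with Ramis's summability theory (Lemma~\ref{lemasyunique} and Watson's theorem for uniqueness).

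One small point in your last step: when $\theta$ and $\theta'$ are far apart, the two large sectors may overlap only in a sector of opening at most $\pi$, so Lemma~\ref{lemasyunique} cannot be applied to that overlap directly. Instead, apply it along a finite chain of nearby non-anti-Stokes directions, or first rotate the Laplace integrals and then use uniqueness in the sector bisected by $\theta'$.
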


\section{A ring morphism} \label{secmorph}

In this section we construct (in Proposition~\ref{propfonda}) an injective ring morphism $\Phi_\theta$ which is the main new feature of this paper. 

From it we will deduce Corollary~\ref{coroth3} (of which Theorem~\ref{th2} stated in the introduction is a consequence), and then a necessary condition for a number to be a unit of $\EE$ (see Corollary~\ref{corpasunite}). This condition will be used in \S \S \ref{secexemple} and~\ref{subsecbessel} to give examples of non-units.

\bigskip

We denote by $\mathcal{R}$ the ring of exponential polynomials with constant complex coefficients and algebraic phases. In other words, $P\in \mathcal{R}$ means $P(x)=\sum_{\rho\in\Qbar}\lambda_\rho e^{\rho x}$ where the coefficients $\lambda_\rho$ are complex numbers and the phases $\rho$ are algebraic numbers; here and throughout this paper, all sums indexed by $\rho\in\Qbar$ are finite sums, since all terms are zero except finitely many of them.

\begin{prop}\label{propfonda}
Let $\theta\in\R$ with $-\pi/2<\theta<\pi/2$. Then there exists an injective ring morphism $\Phi_\theta: \EE\to\mathcal{R}$ such that: 
\begin{itemize}
\item[$(i)$] For any $\xi\in \EE$, $(\Phi_\theta(\xi))(1)=\xi$.
\item[$(ii)$] Let $f$ be an $E$-function. Write $f(x)=\sum_{\rho\in\Qbar} f_\rho(x)e^{\rho x}$ in a large sector bisected by $\theta+\eps$, as in Theorem~\ref{thdcp}, with $\eps>0$ sufficiently small (so that this expansion is independent from $\eps$; it is also valid for $\eps=0$ if $\theta$ is not anti-Stokes). Then, 
$$
\Phi_\theta(f(1))= \sum_{\rho\in\Qbar} f_\rho(1)e^{\rho x}\in \mathcal{R}.
$$
\end{itemize}
\end{prop}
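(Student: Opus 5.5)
The plan is to define $\Phi_\theta$ by formula $(ii)$, show it is well defined, and then check the morphism properties and injectivity. Fix $\theta$ and, for each $E$-function $f$, take $\eps>0$ small enough that no anti-Stokes direction of $f$ lies in $(\theta,\theta+\eps]$. Such an $\eps$ exists because $f$ has only finitely many anti-Stokes directions mod $2\pi$. By the last item of Theorem~\ref{thdcp}, the family $(f_\rho)$ attached to $\theta+\eps$ does not depend on $\eps$, and if $\theta$ itself is not anti-Stokes it coincides with the family attached to $\theta$. Set $\Psi(f)(x)=\sum_\rho f_\rho(1)e^{\rho x}\in\mathcal R$. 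This is meaningful since $1\in D_\alpha$ for the direction $\theta+\eps$ when $\eps$ is small, using $|\theta|<\pi/2$.

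The key step, and the main obstacle, is to show that $\Psi(f)$ depends only on $\xi=f(1)$. By linearity of the decomposition (see below) it suffices to show that $f(1)=0$ implies $f_\rho(1)=0$ for every $\rho$. By Beukers' theorem, $g(x)=f(x)/(x-1)$ is an $E$-function. Choose $\eps$ small enough to work simultaneously for $f$ and $g$, and write $g=\sum_\rho g_\rho e^{\rho x}$. Then $f=\sum_\rho (x-1)g_\rho(x)e^{\rho x}$. The functions $(x-1)g_\rho$ are holomorphic on $D_\alpha$, and their asymptotic expansions are $(x-1)\widehat g_\rho$, which still lie in the required class. By the uniqueness in Theorem~\ref{thdcp}, $f_\rho=(x-1)g_\rho$, so $f_\rho(1)=0$. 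The delicate points are the compatibility of the sectors and the fact that multiplication by $x-1$ preserves the form of the asymptotic expansion (the shifts of the exponent $\alpha$ in $x^{-n-\alpha}$ are harmless). I would invoke Lemma~\ref{lemasyunique} and Watson's theorem here if needed.

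Next comes additivity and multiplicativity. For $E$-functions $f$ and $g$, take $\eps$ working for $f$, $g$, $f+g$ and $fg$ at once. Then
\[
f+g=\sum_\rho (f_\rho+g_\rho)e^{\rho x}, \qquad fg=\sum_\sigma\Big(\sum_{\rho+\rho'=\sigma}f_\rho g_{\rho'}\Big)e^{\sigma x}.
\]
Sums and products of 1-summable expansions of the required type are again of that type, so uniqueness identifies these with the decompositions of $f+g$ and $fg$. Evaluating at $x=1$ shows that $\Psi$ respects sums and products; hence so does $\Phi_\theta$, since $\xi+\xi'=(f+g)(1)$ and $\xi\xi'=(fg)(1)$. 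For constants, $1$ decomposes as $f_0=1$, so $\Phi_\theta(1)=1$. Property $(i)$ is immediate: $\Phi_\theta(\xi)(1)=\sum_\rho f_\rho(1)e^{\rho}=f(1)=\xi$.

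For injectivity, suppose $\Phi_\theta(\xi)=0$. Since the $e^{\rho x}$ with distinct $\rho$ are linearly independent, every coefficient vanishes, so $f_\rho(1)=0$ for all $\rho$. Then $\xi=\sum_\rho f_\rho(1)e^\rho=0$ by $(i)$.
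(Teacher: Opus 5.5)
Your proposal is correct and follows the paper's proof essentially step by step: you define $\Phi_\theta$ through the expansion in direction $\theta+\eps$, prove well-definedness via Beukers' theorem plus uniqueness of the decomposition, check additivity and multiplicativity through sums and products of expansions, and get injectivity from $(i)$. The differences are cosmetic. You reduce well-definedness to the case $f(1)=0$ by linearity, whereas the paper writes $f=g+(x-1)h$ directly. Your appeal to linear independence of the $e^{\rho x}$ in the injectivity step is unnecessary, since $(i)$ alone gives $\xi=\Phi_\theta(\xi)(1)=0$.
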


\begin{Remark} \label{rempolexp}
 If $\xi =\sum_{j=1}^p \alpha_j e^{\beta_j}$ with $\alpha_1,\ldots,\alpha_p,\beta_1,\ldots,\beta_p\in\Qbar$, then $\Phi_\theta(\xi) =\sum_{j=1}^p \alpha_j e^{\beta_jx} $ (for any $\theta$ such that $-\pi/2<\theta<\pi/2$). Indeed $f(x)= \sum_{j=1}^p \alpha_j e^{\beta_j x}$ is an $E$-function such that $f(1)=\xi$, and (assuming, as we may, that the $\beta_j$ are pairwise distinct) its decomposition is given by $f_\rho(x)= \alpha_j$ if $\rho=\beta_j$ for some $j$, and $f_\rho(x)= 0$ otherwise.
\end{Remark}

\begin{proof} Since we need to allow the case where the direction $\theta$ is anti-Stokes for $f$, we shift it a little by considering $\theta+\eps$ for a sufficiently small $\eps>0$; in the case where $\theta$ is not anti-Stokes for $f$, this does not change anything. 
To begin with, let us prove that the decomposition $f(x)=\sum_{\rho\in\Qbar} f_\rho(x)e^{\rho x}$ given by Theorem~\ref{thdcp}
 in a large sector bisected by $\theta+\eps$, with $\eps>0$ sufficiently small, is independent from $\eps$. Indeed, let $\theta_-$ be the maximal anti-Stokes direction such that $-\pi/2< \theta_- \leq \theta$; if there is no such anti-Stokes direction, we let $\theta_- = -\pi/2$. In a similar way, let $\theta_+$ be the least anti-Stokes direction such that $ \theta<\theta_+ <\pi/2$; if there is no such anti-Stokes direction, let $\theta_+ = \pi/2$. Then the expansion of $f$ given by Theorem~\ref{thdcp} in a large sector bisected by $\theta'$ is the same for all $\theta'$ such that $\theta_- < \theta' < \theta_+$. This includes the case of $\theta' = \theta + \eps$ for any sufficiently small $\eps>0$; if $\theta$ itself is not anti-Stokes, it includes also the case where $\theta' = \theta$.

\bigskip

Let us define $\Phi_\theta(\xi)$ for $\xi\in \EE$. We choose an $E$-function $f$ such that $f(1)=\xi$, and let $\Phi_\theta(\xi)= \sum_{\rho\in\Qbar} f_\rho(1)e^{\rho x}$ where $f(x)=\sum_{\rho\in\Qbar} f_\rho(x)e^{\rho x}$ in a large sector bisected by $\theta+\eps$, with $\eps>0$ sufficiently small. In this way, property $(ii)$ holds. The point is to prove that $\Phi_\theta(\xi)$ depends only on $\xi$, and not on the $E$-function $f$ we have chosen.

\bigskip

Let $g$ be another $E$-function such that $g(1)=\xi$. Then $g-f$ vanishes at $1$, so that Beukers' theorem \cite{beukers} provides an $E$-function $h$ such that $f(x)=g(x)+(x-1)h(x)$. For a sufficiently small $\eps>0$, Theorem~\ref{thdcp} applies (separately) to the three $E$-functions $f$, $g$, $h$. Writing the expansions it provides in a large sector bisected by $\theta+\eps$ and, using uniqueness, we obtain $f_\rho(x)=g_\rho(x)+(x-1)h_\rho(x)$ for any $\rho\in\Qbar$. Since $h_\rho$ is holomorphic at 1, this implies $f_\rho(1)=g_\rho(1)$: the value $\Phi_\theta(\xi)$ computed using $g$ is the same as the one computed using $f$.

\bigskip

When $\xi\in\Qbar$ we may choose for $f$ the constant function equal to $\xi$, so that $\Phi_\theta(\xi)=\xi$. In particular, $\Phi_\theta(1)=1$.

\bigskip

Let $\xi_1,\xi_2\in\EE$, and $f_1,f_2$ be $E$-functions such that $f_1(1)=\xi_1$ and $f_2(1)=\xi_2$. For any sufficiently small $\eps>0$, the asymptotic expansion of $f_1+f_2$ in a large sector bisected by $\theta+\eps$ is the sum of those of $f_1 $ and $f_2$; this yields $
\Phi_\theta(\xi_1+\xi_2) = \Phi_\theta(\xi_1)+\Phi_\theta(\xi_2)$. 

On the other hand, for any sufficiently small $\eps>0$, the asymptotic expansion of $g=f_1f_2$ in a large sector bisected by $\theta+\eps$ is the product of those of $f_1$ and $f_2$ (see \cite[\S 2.1]{Ramis}). Accordingly, for any $\rho$, 
$$
g_\rho(x) = \sum_{\alpha+\beta=\rho} f_{1,\alpha}(x)f_{2,\beta}(x)
$$
where the sum is over all pairs $(\alpha,\beta)\in\Qbar^2$ such that $\alpha+\beta=\rho$. Taking $x=1$ yields
$$
\Phi_\theta(\xi_1\xi_2) = \sum_{\rho\in\Qbar} g_\rho(1)e^{\rho x}= \sum_{(\alpha,\beta)\in\Qbar^2} f_{1,\alpha}(1)f_{2,\beta}(1)e^{(\alpha+\beta) x}
= \Phi_\theta(\xi_1)\Phi_\theta(\xi_2).
$$

\bigskip

At last, let $\xi\in\EE$ and $f$ be an $E$-function such that $f(1)=\xi$. Choose $\eps>0$ sufficiently small, with $-\pi/2<\theta+\eps<\pi/2$. Then we have
$$
(\Phi_\theta(\xi))(1) = \sum_{\rho\in\Qbar} f_\rho(1)e^{\rho } = f(1).$$
This proves $(i)$; the fact that $\Phi_\theta$ is injective follows from it.
\end{proof}

Let us deduce from Proposition~\ref{propfonda} the following corollary, announced in the introduction and of which Theorem~\ref{th2} is a consequence.

\begin{coro}\label{coroth3} Let $p,p'\geq 1$ and $\alpha_1,\ldots,\alpha_p$, $\beta_1,\ldots,\beta_p$, $\alpha'_1,\ldots,\alpha'_{p'}$, $\beta'_1,\ldots,\beta'_{p'}$ be algebraic numbers. Assume that 
$$\xi_0 = \frac{\sum_{j=1}^p \alpha_j e^{\beta_j}}{\sum_{j=1}^{p'} \alpha'_j e^{\beta'_j}}
$$
belongs to $\EE$. Then $\xi_0 =\sum_{j=1}^{p''} \alpha''_j e^{\beta''_j} $ for some 
$p''\geq 1$ and $\alpha''_1,\ldots,\alpha''_{p''}$, $\beta''_1,\ldots,\beta''_{p''}\in\Qbar$.
\end{coro}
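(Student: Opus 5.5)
Apply $\Phi_\theta$ for a fixed $\theta$ (say $\theta=0$) to the relation $\xi_0 \cdot D = N$, where $N=\sum_{j=1}^p \alpha_j e^{\beta_j}$ and $D=\sum_{j=1}^{p'} \alpha'_j e^{\beta'_j}$. Note $D\neq 0$, since $\xi_0$ is defined. Both $N$ and $D$ lie in $\EE$, and by Remark~\ref{rempolexp} we have $\Phi_\theta(N)=P(x):=\sum_j \alpha_j e^{\beta_j x}$ and $\Phi_\theta(D)=Q(x):=\sum_j \alpha'_j e^{\beta'_j x}$. Here $Q\neq 0$ because $Q(1)=D\neq0$. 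Since $\Phi_\theta$ is a ring morphism, $R:=\Phi_\theta(\xi_0)\in\mathcal R$ satisfies $R\,Q=P$ in $\mathcal R$. The goal is to show that $R$ has algebraic coefficients; then $\xi_0=R(1)$ by Proposition~\ref{propfonda}$(i)$, which is of the required form.

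The main step is a divisibility statement in $\mathcal R$. Write $R=\sum_{\rho}\lambda_\rho e^{\rho x}$ with $\lambda_\rho\in\C$ and $\rho\in\Qbar$. Distinct exponentials $e^{\rho x}$ with algebraic phases are linearly independent over $\C$, so $\mathcal R$ is the group algebra $\C[\Gamma]$ of the additive group $\Gamma\subset\Qbar$. Equating coefficients in $RQ=P$ then gives a linear system over $\Qbar$ in the unknowns $\lambda_\rho$, for $\rho$ in the finite support $S$ of $R$:
$$\sum_{\rho+\beta'=\sigma}\lambda_\rho\, q_{\beta'} = p_\sigma \qquad (\sigma\in S+\operatorname{supp}Q),$$
where $q_{\beta'}$ and $p_\sigma$ are the (algebraic) coefficients of $Q$ and $P$.

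The solution of this system is unique, because $\C[\Gamma]$ is an integral domain ($\Gamma$ is torsion-free) and $Q\neq0$. A linear system with coefficients in $\Qbar$ that has a unique complex solution has that solution in $\Qbar$, by Cramer's rule applied to a maximal nonsingular minor, or simply because the solution space is defined over $\Qbar$. Hence every $\lambda_\rho\in\Qbar$, and $\xi_0=R(1)=\sum_\rho \lambda_\rho e^{\rho}$ has the required form. If $R=0$ then $\xi_0=0$, which is written as $0\cdot e^0$, so the case $p''\ge1$ is harmless.

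The only delicate points are the integral-domain property of $\mathcal R$ and the linear independence of the exponentials. I expect no real obstacle: linear independence is classical (for example via a Vandermonde or Wronskian argument), and integrality follows by ordering $\Gamma$ compatibly with addition (e.g.\ lexicographically via a $\Q$-basis of the real and imaginary parts) and comparing leading terms.
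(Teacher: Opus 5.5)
Your proposal is correct and follows the paper's proof. You apply $\Phi_0$ to $\xi_0\cdot D=N$, identify $\Phi_0(N)$ and $\Phi_0(D)$ via Remark~\ref{rempolexp}, equate coefficients of $e^{\sigma x}$ to obtain a linear system over $\Qbar$ with a unique complex solution, and conclude with $\xi_0=\Phi_0(\xi_0)(1)$. Your justification of uniqueness, namely that $\mathcal R$ is an integral domain (which also follows at once from the identity theorem for entire functions), supplies a step the paper leaves implicit.
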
 

\begin{proof}
To begin with, we may assume that $\beta_1,\ldots,\beta_p$ (resp. $\beta'_1,\ldots,\beta'_{p'}$) are pairwise distinct.

Write $\xi =\sum_{j=1}^p \alpha_j e^{\beta_j}$ and $\xi' =\sum_{j=1}^{p'} \alpha'_j e^{\beta'_j}$, so that $\xi_0 \xi' = \xi$. We fix $\theta=0$ (for instance) and apply the ring morphism $\Phi_0$ of Proposition~\ref{propfonda}: we obtain $\Phi_0(\xi_0)\Phi_0(\xi')=\Phi_0(\xi)$.
Writing $\Phi_0(\xi_0)=\sum_\rho \lambda_\rho e^{\rho x}$ where the $\lambda_\rho$ are complex numbers, we obtain
using Remark~\ref{rempolexp} for $\xi$ and $\xi'$:
$$
\Big( \sum_\rho \lambda_\rho e^{\rho x}\Big)
\Big( \sum_{j=1}^{p'} \alpha'_j e^{\beta'_jx}\Big) = 
\Big( \sum_{j=1}^p \alpha_j e^{\beta_jx}\Big).
$$

For any $\sigma\in\Qbar$, the coefficients of $e^{\sigma x}$ are equal so that:
$$
\sum_{j=1}^{p'}\lambda_{\sigma-\beta'_j} \alpha'_j = 
\left\{\begin{array}{l} \alpha_j\mbox{ if $\sigma=\beta_j$ for some $j$}\\0\mbox{ otherwise}\end{array}\right.
$$
This is a linear system of which the $\alpha'_j$ are coefficients and the 
$\lambda_\rho$ are unknows. It has a unique solution with $\lambda_\rho\in\C$. Now all $ \alpha'_j$ and all $ \alpha'_j$ are algebraic, so this solution consists in algebraic numbers too. Since $\xi_0 = \Phi_0(\xi_0)(1)=\sum_\rho \lambda_\rho e^{\rho }$ this concludes the proof of Corollary~\ref{coroth3}.
\end{proof}

\bigskip

The following corollary is a powerful tool to prove that a given number is not a unit of $\EE$; it is a step towards Conjecture~\ref{conj:1}. We will apply it in \S \S \ref{secexemple} and~\ref{subsecbessel} to give examples of non-units.

\begin{coro} \label{corpasunite}
Let $f$ be an $E$-function such that $f(1)$ is a unit of $\EE$. Let $\theta\in\R$ with $-\pi/2<\theta<\pi/2$; assume that $\theta$ is not an anti-Stokes direction for $f$, and denote by $\sum f_\rho(x)e^{\rho x}$ the expansion given by Theorem~\ref{thdcp} in a large sector bisected by $\theta$.

Then there exists a unique $\rho_0\in\Qbar$ such that $ f_{\rho_0}(1)\neq 0$.
\end{coro}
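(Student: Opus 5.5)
The plan is to apply the injective ring morphism $\Phi_\theta$ of Proposition~\ref{propfonda} to the relation $\xi\eta=1$, where $\xi=f(1)$ and $\eta\in\EE$ is the inverse of $\xi$. The morphism turns this relation into a unit equation in the ring $\calR$ of exponential polynomials. In $\calR$ the units are easy to describe, and this forces $\Phi_\theta(\xi)$ to consist of a single exponential term.

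First, since $\theta$ is not anti-Stokes for $f$, property $(ii)$ of Proposition~\ref{propfonda} (with $\eps=0$ allowed) gives
\[
\Phi_\theta(\xi)=\sum_{\rho\in\Qbar} f_\rho(1)e^{\rho x}.
\]
Let $\eta=\xi^{-1}\in\EE$. Multiplicativity and $\Phi_\theta(1)=1$ give $\Phi_\theta(\xi)\Phi_\theta(\eta)=1$ in $\calR$. Write $\Phi_\theta(\eta)=\sum_\sigma\mu_\sigma e^{\sigma x}$, a finite sum.

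Second, I would show that every unit of $\calR$ is a monomial $\lambda e^{\rho x}$ with $\lambda\neq0$. I will use the linear independence of $e^{\rho x}$ for distinct $\rho\in\Qbar$ over $\C$, which makes $\calR$ the group algebra $\C[\Qbar]$ of the additive group $\Qbar$. Choose a total order on $\Qbar$ compatible with addition, for instance the lexicographic order on $(\mathrm{Re}\,\rho,\mathrm{Im}\,\rho)$.

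Let $\rho_{\max}$, $\rho_{\min}$ be the largest and smallest $\rho$ with $f_\rho(1)\neq0$; these sets are nonempty because $\Phi_\theta(\xi)(1)=\xi\neq0$. Define $\sigma_{\max}$, $\sigma_{\min}$ in the same way for the $\mu_\sigma$. In the product, the coefficient of $e^{(\rho_{\max}+\sigma_{\max})x}$ is $f_{\rho_{\max}}(1)\mu_{\sigma_{\max}}\neq0$, since this exponent is attained uniquely. The same holds for $\rho_{\min}+\sigma_{\min}$. Because the product equals $1=e^{0x}$, both exponents must equal $0$. Hence
\[
\rho_{\max}+\sigma_{\max}=0=\rho_{\min}+\sigma_{\min},
\]
while $\rho_{\max}\ge\rho_{\min}$ and $\sigma_{\max}\ge\sigma_{\min}$. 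This forces $\rho_{\max}=\rho_{\min}=:\rho_0$.

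Therefore exactly one $\rho_0$ satisfies $f_{\rho_0}(1)\neq0$, and this gives both existence and uniqueness.

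The only delicate point is making sure the coefficients $f_\rho(1)$ in $\Phi_\theta(\xi)$ are really determined by $\Phi_\theta(\xi)$ as a function. That is exactly the linear independence of distinct exponentials, which is classical, for example via a Vandermonde or Wronskian argument, or by comparing growth along a suitable ray. Everything else follows formally from Proposition~\ref{propfonda}.
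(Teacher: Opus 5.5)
Your proof is correct and has the same structure as the paper's: apply $\Phi_\theta$ to $f(1)\cdot f(1)^{-1}=1$, then use that the units of $\calR$ are exactly the monomials $\lambda e^{\rho_0 x}$ with $\lambda\neq0$.

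The difference lies in how you justify that last fact. The paper does it in a footnote via Hadamard factorization. A unit of $\calR$ has no zeros and has exponential growth, so it equals $e^{ax+b}$; linear independence of exponentials (left implicit there) then forces a single term with $a\in\Qbar$.

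Your extremal-exponent argument instead works in the group algebra $\C[\Qbar]$, ordered lexicographically by $(\mathrm{Re}\,\rho,\mathrm{Im}\,\rho)$. It is purely algebraic and self-contained. It needs only the linear independence of exponentials, which both routes use anyway to read off the coefficients $f_\rho(1)$.

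The analytic route proves slightly more: an element of $\calR$ with at least two nonzero terms always vanishes somewhere in $\C$. For the corollary, however, only invertibility inside $\calR$ is available and needed, so your version is fully sufficient.
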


 \begin{proof} Since $f(1)$ is a unit of $\EE$, there exists an $E$-function $g$ such that $f(1)g(1)=1$. Since $\Phi_\theta$ is a ring morphism we have $\Phi_\theta(f(1))\Phi_\theta(g(1))=1$. Now the units of $\mathcal{R}$ are exactly~(\footnote{Indeed, any element of $\mathcal{R}$ is an entire function with exponential growth, so that it can be factorized in the form given by Hadamard's factorization theorem; since a unit of $\mathcal{R}$ has no zeros, it must therefore be of the stated form.})
 the functions of the form $\lambda e^{\rho_0 z}$ with $\lambda\in\C^\times$ and $\rho_0\in\Qbar$. Therefore $\Phi_\theta(f(1))$ is of this form: this concludes the proof.
\end{proof}

\section{Explicit computations} \label{secexplicite}

This section is devoted to explicit computations in special cases. We first recall in \S \ref{subsec41} the definition of the Borel-Laplace summation. Then we compute in \S \ref{subsec42} the Borel-Laplace summation of a ${}_2F_0$. This enables us to study the value $ {}_1F_1[a;b;1]$ in \S \ref{secexemple}. At last, we turn to the Bessel function $J_0$ in \S \ref{subsecbessel}.

\subsection{Definition of the Borel-Laplace summation}\label{subsec41}

There is a subtle point in our setting: we compute the Borel-Laplace summation (i.e., 1-summation) of functions of $1/x$, not of $x$. To make everything clear, let us 
recall the definition (see for instance \cite{Ramis}). We begin with a direction $\theta$ and an \antie-function in $1/x$ with no constant term, $\fgo (1/x)=\sum_{n=0}^\infty c_nx^{-n-1}$; in general this series is divergent for any $x$. The Borel transform of $\fgo (1/x)$ is
$$
 \mathcal{B}\!\left(\sum_{n=0}^\infty c_nx^{-n-1}\right)(\xi)
 :=\sum_{n=0}^\infty\frac{c_n}{n!}\xi^{n}.
$$
Since $\fgo (1/x)$ is an \antie-function, this is a $G$-function of the variable $\xi$ denoted by $g(\xi)$. In particular, it has moderate growth at infinity. If $\theta$ is not an anti-Stokes direction, the Borel-Laplace summation (i.e., 1-summation) in the direction $\theta$ is then given by 
\begin{equation}
 \label{eqBLs}
 \Stheta \fgo (1/x)
 :=\int_0^{e^{-i\theta}\infty}
 e^{-\xi x}g(\xi)\,d\xi
\end{equation}
for any $x$ in a large sector bisected by $\theta$ (i.e., such that $x\neq 0 $ and $\theta-\pi/2-\eps < \arg(x) < \theta+\pi/2+\eps $ for some $\eps>0$). We would like to draw the reader's attention on the fact that the integration path goes from $0$ to $e^{-i\theta}\infty$, because we are computing the Borel-Laplace summation of $\fgo (1/x)$ in a large sector bisected by $\theta$ (and not that of $\fgo (x)$). In the setting of this paper, we assume $|\theta|<\pi/2$ and evaluate Eq. \eqref{eqBLs} at $x=1$.

\subsection{Computation of the Borel-Laplace summation of a ${}_2F_0$}\label{subsec42}

In this section we consider
$$
 \fgo (1/x):=\frac1x \cdot {}_2F_0[\alpha,\beta;;1/x]
 =\sum_{n=0}^\infty\frac{(\alpha)_n(\beta)_n}{n!}x^{-n-1}
$$
with $\alpha, \beta \in \C$. If $\alpha$ and $\beta$ are rational then $\fgo $ is an \antie-function and we are in the setting of \S \ref{subsec41}. However this assumption is not necessary here: for any 
$\alpha, \beta \in \C$ the Borel-Laplace summation makes sense, and Eq. \eqref{eqBLs} holds. To make it explicit, we notice that 
 $$\mathcal{B} \!\left( \fgo (1/x)\right) (\xi)={}_2F_1[\alpha,\beta;1;\xi]=\sum_{n=0}^\infty\frac{(\alpha)_n(\beta)_n}{n!^2}\xi^n$$
so that Eq. \eqref{eqBLs} yields, for any $\theta$ such that $0<\vert \theta\vert<\pi/2$:
\begin{equation}\label{eq:BLplusbis}
 \Stheta \fgo (1)
 =\int_0^{e^{-i\theta}\infty}
 e^{-\xi}{}_2F_1[\alpha,\beta;1;\xi]\,d\xi.
\end{equation}
Here we use analytic continuation of $\xi \mapsto {}_2F_1[\alpha,\beta;1;\xi]$ outside the unit disk, in an open subset of $\C$ that contains the half-line $[0, e^{-i\theta}\infty)$. Since $\xi \mapsto {}_2F_1(\alpha,\beta;1;\xi)$ satisfies a fuchsian differential equation, 
it has moderated growth at infinity and the integral \eqref{eq:BLplusbis} is convergent.

We shall also be interested in $\tilde \fgo (1/x)=\fgo (-1/x)$. Its Borel transform is $-{}_2F_1[\alpha,\beta;1;-\xi]$ so that  
Eq. \eqref{eqBLs} yields, for any $\theta$ such that $\vert \theta\vert<\pi/2$:
\begin{equation}\label{eq:BLpluster}
 \Stheta\tilde \fgo (1)= -\int_0^{e^{-i\theta}\infty}
 e^{-\xi}{}_2F_1[\alpha,\beta;1;-\xi]\,d\xi.
\end{equation}

In the following lemma, we provide simpler expressions for $\Stheta \fgo (1)$ and $\Stheta\tilde \fgo (1)$, that will in particular enable us to prove  that they are non-zero.

\begin{lem}\label{lem:1summation}
Let $\alpha,\beta\in\C$ be such that $0<\Re(\alpha), \Re(\beta)<1$. We consider 
$$\fgo (1/x):=\frac1x\cdot {}_2F_0[\alpha,\beta;;1/x]
\quad \mbox{and } \quad 
\tilde \fgo (1/x) := \fgo (-1/x).
$$

\medskip
$(i)$
For any $\theta$ such that $0<\vert \theta\vert<\pi/2$, $\Stheta \fgo (1)$ admits the (absolutely convergent) integral representation
\begin{equation}
 \Stheta \fgo (1)
 =\frac1{\Gamma(\beta)}
 \int_0^{e^{-i\theta}\infty}
 e^{-u}u^{\beta-1}(1-u)^{-\alpha}\,du.
 \label{eq:plusbeta}
\end{equation}
The value of the integral depends on $\theta\neq 0$ only by the sign of $\theta$. Moreover,
\begin{equation}
 \Stheta \fgo (1) =\frac1{\Gamma(\beta)}\Big(
 \int_0^1 e^{-u}u^{\beta-1}(1-u)^{-\alpha}\,du
 +e^{-\eps i\pi\alpha}\int_1^{+\infty} e^{-u}u^{\beta-1}(u-1)^{-\alpha}\,du
 \Big) \label{eq:lateralbeta}
\end{equation}
where $\eps\in\{-1,1\}$ is the sign of $\theta$.

\medskip
 $(ii)$ For any $\theta$ such that $\vert \theta\vert<\pi/2$, $\Stheta\tilde \fgo (1)$ admits the (absolutely convergent) integral representation
\begin{align}
 \Stheta\tilde \fgo (1)
 &=-\frac1{\Gamma(\beta)}
 \int_0^{+\infty} e^{-u}u^{\beta-1}(1+u)^{-\alpha}\,du.
 \label{eq:minusbeta}
\end{align}
It is independent of $\theta$. 
\end{lem}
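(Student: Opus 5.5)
The plan is to start from the Laplace integrals \eqref{eq:BLplusbis} and \eqref{eq:pluster}, i.e. \eqref{eq:BLpluster}, and replace ${}_2F_1[\alpha,\beta;1;\pm\xi]$ by an Euler-type integral. The swap of $\alpha$ and $\beta$ is then done by Fubini, and the inner integral becomes a Gamma integral that cancels the $\Gamma(\beta)$.

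For part $(i)$ I would use the Euler representation
$$
{}_2F_1[\alpha,\beta;1;\xi]=\frac{1}{\Gamma(\beta)\Gamma(1-\beta)}\int_0^1 t^{\beta-1}(1-t)^{-\beta}(1-\xi t)^{-\alpha}\,dt .
$$
It is valid since $\Re(\beta)>0$ and $\Re(1-\beta)>0$, and it gives the analytic continuation to $\xi\notin[1,+\infty)$. In particular it holds on the ray $\arg\xi=-\theta$ with $\theta\neq0$. Substituting into \eqref{eq:BLplusbis}, I would swap the integrals. This is justified because $|(1-\xi t)^{-\alpha}|$ is bounded on the ray up to a factor $(1+|\xi| t)^{|\Im\alpha|\pi}$-type growth, which is killed by $e^{-\xi}$ since $\Re(\xi e^{i\theta})>0$ controls $|e^{-\xi}|$.

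Next, in the $\xi$-integral I would substitute $u=\xi t$ (a ray in the same direction) and get $\int_0^{e^{-i\theta}\infty}e^{-u/t}(1-u)^{-\alpha}\,du/t$. Swapping back, the $t$-integral becomes $\int_0^1 t^{\beta-2}(1-t)^{-\beta}e^{-u/t}\,dt$. With $s=1/t-1$ this equals $e^{-u}\int_0^\infty s^{-\beta}e^{-us}\,ds=e^{-u}\Gamma(1-\beta)u^{\beta-1}$. Combining gives \eqref{eq:plusbeta}.

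Independence of the sign of $\theta$ comes from Cauchy's theorem. The integrand $e^{-u}u^{\beta-1}(1-u)^{-\alpha}$ is holomorphic in the sector between two rays on the same side of $[0,+\infty)$, and it decays exponentially there. The integrability at $0$ comes from $\Re\beta>0$. Then \eqref{eq:lateralbeta} follows by letting $\theta\to0^{\pm}$, i.e. collapsing the ray onto $[0,+\infty)$ from below or above. Near $u=1$ the singularity is integrable since $\Re\alpha<1$, and dominated convergence applies. The determination of $(1-u)^{-\alpha}$ for $u>1$ is $(u-1)^{-\alpha}e^{\pm i\pi\alpha}$, according to the side. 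For $\theta>0$ the ray has $\arg u=-\theta<0$, so $1-u$ has positive imaginary part and $\arg(1-u)\to\pi$. This gives the factor $e^{-i\pi\alpha}$, matching $\eps=1$.

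For part $(ii)$ the computation is the same with $-\xi$ in place of $\xi$. Now $(1+\xi t)^{-\alpha}$ has no singularity on any ray with $|\theta|<\pi/2$, so we get \eqref{eq:minusbeta} after rotating to $[0,+\infty)$. The independence of $\theta$ follows from holomorphy in the right half-plane.

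I expect the main obstacle to be the justification of the Fubini swaps, i.e. uniform bounds for $(1\mp\xi t)^{-\alpha}$ on the rays, together with the careful tracking of branches in the limit $\theta\to0^\pm$. The Gamma-integral identity itself is routine.
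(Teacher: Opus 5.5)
Your proposal is correct and follows essentially the same route as the paper. Both arguments use Euler's integral for ${}_2F_1[\alpha,\beta;1;\xi]$, Fubini, and a change of variables that isolates the Gamma integral $\Gamma(1-\beta)u^{\beta-1}$; they then apply Cauchy's theorem, track the branch of $(1-u)^{-\alpha}$ in the same way as $\theta\to0^\pm$, and handle $\tilde\fgo$ by the analogous computation. Your two successive substitutions ($u=\xi t$, then $1/t-1$ in the inner integral) are simply the paper's single change of variables $(s,t)\mapsto(st,\,s(1-t))$ with $\xi=e^{-i\theta}s$, done in two steps; and your Fubini bound can be stated more simply, since on the ray $|(1-\xi t)^{-\alpha}|\le e^{\pi|\Im\alpha|}|\sin\theta|^{-\Re\alpha}$ is bounded while $|e^{-\xi}|=e^{-|\xi|\cos\theta}$.
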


\begin{proof} Throughout the proof, the assumptions $0<\Re(\alpha), \Re(\beta)<1$ ensure that all integrals are absolutely convergent and that we can perform for instance change of variables and inversion of integral signs. 

$(i)$ We use Euler's classical representation 
\begin{equation}\label{eq:eulerbeta}
 {}_2F_1(\alpha,\beta;1;\xi)
 =\frac1{\Gamma(\beta)\Gamma(1-\beta)}
 \int_0^1 \frac{t^{\beta-1}(1-t)^{-\beta}}
{(1-\xi t)^{\alpha}}\,dt,
\end{equation}
initially valid for $\vert \xi\vert <1$, but the integral defines the analytic continuation to $\xi\in \mathbb C\setminus [1, +\infty)$. Writing $\xi=e^{-i\theta}s$ in \eqref{eq:BLplusbis} gives
\begin{equation}\label{eqpareil}
 \Stheta \fgo (1)
 =\frac{e^{-i\theta}}{\Gamma(\beta)\Gamma(1-\beta)}
 \int_0^{+\infty}\!\int_0^1
 e^{-e^{-i\theta}s}t^{\beta-1}(1-t)^{-\beta}
 (1-e^{-i\theta}st)^{-\alpha}\,dt\,ds.
\end{equation}
The classical change of variables
$x=st$ and $y=s(1-t)$ maps $(0,+\infty)\times(0,1)$ bijectively onto $(0,+\infty)^2$, with
$$
s=x+y,\qquad t=\frac{x}{x+y},\qquad
 ds\,dt=\frac{dx\,dy}{x+y}.
$$
Moreover
$
 t^{\beta-1}(1-t)^{-\beta}
 =x^{\beta-1}y^{-\beta}(x+y).
$
Hence, 
\begin{equation*}
 \Stheta \fgo (1)
 =\frac{e^{-i\theta}}{\Gamma(\beta)\Gamma(1-\beta)}
 \left(\int_0^{+\infty} e^{-e^{-i\theta}x}
 x^{\beta-1}(1-e^{-i\theta}x)^{-\alpha}\,dx\right)\times\left(\int_0^{+\infty} e^{-e^{-i\theta}y}y^{-\beta}\,dy\right).
\end{equation*}
Since $\vert \theta\vert<\pi/2$, we have 
$$
 \int_0^{+\infty} e^{-e^{-i\theta}y}y^{-\beta}\,dy
 =\Gamma(1-\beta)e^{i\theta(1-\beta)}.
$$
Hence
$$
 \Stheta \fgo (1)
 =\frac{e^{-i\theta\beta}}{\Gamma(\beta)}
 \int_0^{+\infty} e^{-e^{-i\theta}x}
 x^{\beta-1}(1-e^{-i\theta}x)^{-\alpha}\,dx.
$$
With $u=e^{-i\theta}x$ this becomes \eqref{eq:plusbeta}. By Cauchy's theorem, \eqref{eq:plusbeta} depends on $\theta$ only by its sign because the singularities of the integrand are in $[0,+\infty)$.

We next let $\theta$ tend to $0^+$ or $0^-$. Near $u=1$ the integrand has size $|1-u|^{-\Re(\alpha)}$; hence both integrals converge under the assumptions on $\alpha$ and $\beta$. For $\Re(u)>1$, when the contour approaches from the upper half-plane, $1-u$ approaches the negative real axis from below, so that
$(1-u)^{-\alpha}=e^{i\pi\alpha}(u-1)^{-\alpha}$ for $u>1$; 
when the contour approaches from below, the factor is $e^{-i\pi\alpha}$. This proves \eqref{eq:lateralbeta}. 

\medskip

The proof of $(ii)$ is similar. We begin with Eq. \eqref{eq:BLpluster} and apply Euler's identity \eqref{eq:eulerbeta}. Letting $\xi=e^{-i\theta}s$ we obtain
$$
 \Stheta\tilde \fgo (1)
 =-\frac{e^{-i\theta}}{\Gamma(\beta)\Gamma(1-\beta)}
 \int_0^{+\infty}\!\int_0^1
 e^{-e^{-i\theta}s}t^{\beta-1}(1-t)^{-\beta}
 (1+e^{-i\theta}st)^{-\alpha}\,dt\,ds.
$$
Up to the sign coming from Eq. \eqref{eq:BLpluster}, this is exactly the same right hand side as in Eq. \eqref{eqpareil}, except that we have $ (1+e^{-i\theta}st)^{-\alpha}$ instead of $ (1-e^{-i\theta}st)^{-\alpha}$. The rest of the proof is completely similar, except that the integrand has no pole in the positive real axis: the sign of $\theta$ does not matter, and $\theta$ can be chosen equal to zero.
\end{proof}

\subsection{Application: the value $ {}_1F_1[a;b;1]$} \label{secexemple}

Let $f(x) := {}_1F_1(a;b;x)$ with $a,b\in\Q\setminus\Z$ such that $a-b\not\in\Z$. Let $\theta$ be a real number with $0<|\theta|<\pi$; denote by $\eps\in\{-1,1\}$ the sign of $\theta$. Then we have \cite[Eq. (13.7.2)]{DLMF}
$$
f(x)\approx \frac{\Gamma(b)}{\Gamma(b-a)}e^{\eps i\pi a} x^{-a} {}_2F_0[a,1+a-b; ;-1/x] + \frac{\Gamma(b)}{\Gamma(a)} e^{x} x^{a-b} {}_2F_0[1-a,b-a; ;1/x]
$$
in a large sector bisected by $\theta$. 

\bigskip

We denote by $\Stheta$ the Borel-Laplace summation (i.e., $1$-summation) in a large sector bisected by $\theta$; in other words, $\Stheta$ is the inverse of the asymptotic expansion map. Then with the notation of Theorem~\ref{thdcp} we have 
$$f(x) = f_0(x)+e^x f_1(x)
$$
with
$$f_0(x) = \frac{\Gamma(b)}{\Gamma(b-a)}e^{\eps i\pi a} x^{-a} \Stheta\, {}_2F_0[a,1+a-b; ;-1/x]
$$
and
$$f_1(x) = \frac{\Gamma(b)}{\Gamma(a)} x^{a-b} \Stheta\, {}_2F_0[1-a,b-a; ;1/x].
$$
Since anti-Stokes directions are exactly the multiples of $\pi$, $\Stheta$ depends only on the sign $\eps$ of $\theta$ (recall that we assume $0<|\theta|<\pi$); so do $f_0$ and $f_1$. From now on, we assume that $0<|\theta|<\pi/2$. Then we have
$$
\Phi_\theta(f(1)) = f_0(1)+ f_1(1)e^x.
$$

Let us give explicit integral expressions of $f_0(1)$ and $f_1(1)$ using Lemma~\ref{lem:1summation}. With this aim in mind, we assume that $0<a<1$ and $0<b-a<1$ and consider the following functions: 
$$
\tilde \fgo _0(1/x) := \frac{-1}x {}_2F_0[a,1+a-b; ;-1/x] \quad \mbox{ and }\quad 
\fgo _1(1/x) := \frac1x {}_2F_0[1-a,b-a; ;1/x],
$$
so that 
$$f_0(x) = - \frac{\Gamma(b)}{\Gamma(b-a)}e^{\eps i\pi a} x^{1-a} \Stheta\, \tilde \fgo _0(1/x)\quad \mbox{ and }\quad 
f_1(x) = \frac{\Gamma(b)}{\Gamma(a)} x^{1+a-b} \Stheta\, \fgo _1(1/x) .
$$
Lemma~\ref{lem:1summation} yields 
$$ \Stheta\,\tilde \fgo _0(1) = \frac{-1}{\Gamma(1+a-b)} \int_0^{+\infty} e^{-u}u^{a-b}(1+u)^{-a}\,du$$
and
$$ \Stheta\, \fgo _1(1)=\frac{1}{\Gamma(b-a)}\Big(
 \int_0^1 e^{-u}u^{b-a-1}(1-u)^{a-1}\,du
 +e^{-\eps i\pi(1-a)}\int_1^{+\infty} e^{-u}u^{b-a-1}(u-1)^{a-1}\,du
 \Big)
$$
so that 
\begin{equation}\label{eq:intf0}
f_0(1) = \frac{\Gamma(b)}{\Gamma(b-a)\Gamma(1+a-b)}e^{\eps i\pi a} 
 \int_0^{+\infty} e^{-u}u^{a-b}(1+u)^{-a}\,du
\end{equation}
and
\begin{equation}\label{eq:intf1}
f_1(1) = \frac{\Gamma(b)}{\Gamma(a)\Gamma(b-a)}\Big(
 \int_0^1 e^{-u}u^{b-a-1}(1-u)^{a-1}\,du
 +e^{-\eps i\pi(1-a)}\int_1^{+\infty} e^{-u}u^{b-a-1}(u-1)^{a-1}\,du
 \Big).
\end{equation}
In particular, both $f_0(1)$ and $ f_1(1)$ are non-zero: using Corollary~\ref{corpasunite} we deduce that $f(1)$ is not a unit of $\EE$. 

\bigskip

We conclude this section with a few numerical examples (performed with Maple 2023) to  illustrate the equation $f(1) = f_0(1)+e f_1(1)$. The value $f(1)$ is computed by an evaluation of hypergeom$([a],[b],1)$, the values $f_0(1)$ and $f_1(1)$ are computed by means of the integral expressions \eqref{eq:intf0} and \eqref{eq:intf1}, which Maple computes with 10 digits after the decimal point in less than a second. 

With $a=1/3$ and $b=1/2$, we have 
$$
f_0(1)\simeq 0.1353357497+0.0234408394i, \; 
f_1(1) \simeq 0.7161782862-0.8623402924i, $$  
$$f_0(1)+e f_1(1) 
\simeq 2.0821101711 \simeq f(1).
$$
With $a=1/4$ and $b=1/3$, we have 
$$
f_0(1)\simeq 0.1441081810+0.1441081810i, \; 
f_1(1) \simeq 0.7741774418-0.0530144371i, 
$$  
$$
f_0(1)+e f_1(1) 
\simeq 2.2485406533 \simeq f(1).
$$

\subsection{The Bessel function} \label{subsecbessel}

Consider the Bessel function $J_0(x):=\sum_{n=0}^\infty (-1)^n (x/2)^{2n}/n!^2$, and let $\theta\in\R$ with $-\pi/2<\theta<\pi/2$. Then the asymptotic expansion of $J_0$ in a large sector bisected by $\theta$ (see \cite[Eq. (10.17.3)]{DLMF}) can be written as $f_i(x)e^{ix}+f_{-i}(x)e^{-ix}$ with 
$$
f_i(x) \approx \widehat f_i(x) := \frac1{\sqrt{2i\pi}}x^{-1/2} \sum_{n=0}^\infty \frac{(1/2)_n^2}{(2i)^n n!}x^{-n}
$$
and the same formula for $f_{-i}(x)$ (up to replacing everywhere $i$ by $-i$). 
Now $\widehat f_i(x) $ and $\widehat f_{-i}(x) $ are coefficientwise complex conjugates. Choosing $\theta=0$, we deduce that their Borel-Laplace sums in a large sector bisected the real axis take conjugate values at real values of $x$. Accordingly, $f_i(1)$ and $f_{-i}(1)$ are complex conjugates. Therefore they are non-zero, since otherwise $J_0(1)$ would be zero (and it is transcendental due to Siegel's theorem \cite{siegel}):
 Corollary~\ref{corpasunite} yields $J_0(1)\not\in \EE^\times$.

\section{Proof of Theorem~\ref{th1}}\label{sec5}

\begin{proof}[Proof of Theorem~\ref{th1}] It follows from Theorem~\ref{th2} that $e+1$ is not a unit of $\EE$, so that $\EE$ is not a field. 
We proved in \S \S \ref{secexemple} and~\ref{subsecbessel} that $ {}_1F_1(a;b;1)$ (with rationals $a$, $b$ such that $0<a<1$ and $0<b-a<1$) and $J_0(1)$ are not units either.

\bigskip

To prove that $\EE$ is not a factorial ring, we notice that for any $n\geq 1$,
\begin{equation}\label{eqnonfacto}
e-1 = \prod_{k=1}^n \Big( e^{1/n}-e^{2ik\pi/n}\Big) 
\end{equation}
with $ e^{1/n}-e^{2ik\pi/n} \in\EE\setminus\EE^\times$ for any $k$ using Theorem~\ref{th2}. If $\EE$ is factorial, for any $x\in\EE\setminus\{0\}$ we may denote by $\ell(x)$ the sum of its $p$-adic valuations (taken over all irreducible elements $p$, up to association). Then $\ell(x)\geq 1$ for any non-unit $x$, and $\ell(xy)=\ell(x)+\ell(y)$ for any non-zero $x$, $y$. Accordingly, when writing a non-zero element $x$ as a product of non-units, the number of factors is at most $\ell(x)$. This contradicts Eq. \eqref{eqnonfacto} for $n>\ell(e-1)$: $\EE$ is not a factorial ring.

\bigskip

To prove that $\EE$ is not noetherian, we denote by $I_n$ the principal ideal of $\EE$ generated by $\exp(1/2^{n})-1$. Then $(I_n)_{n\geq 0}$ is an infinite increasing sequence of principal ideals. Indeed, for any $n$ we have $$(\exp(1/2^{n+1})-1)(\exp(1/2^{n+1})+1)=\exp(1/2^n)-1$$ 
with $\exp(1/2^{n+1})+1\in\EE\setminus\EE^\times$ using Theorem~\ref{th2}, so that $I_n\subsetneq I_{n+1}$.
\end{proof}

\section{Open questions}\label{secopen}

An arithmetic Nilsson-Gevrey series of order 1 is a finite sum
$$
\sum_{\alpha,k,\ell } \lambda_{\alpha,k,\ell }\, x^\alpha \, (\log x)^k \, y_{\alpha,k,\ell}(x)
$$
where the indices $\alpha$ are rationals, $k$ and $\ell$ are non-negative integers, the $y_{\alpha,k,\ell}$ are 
 \antie-functions and the $\lambda_{\alpha,k,\ell }$ are complex numbers. The set of arithmetic Nilsson-Gevrey series of order 1 is denoted by $\NGA_1\{x\}$. 

\bigskip

We also recall the following notation:
\begin{itemize}
\item $\GG$ is the ring of values at algebraic points of (analytic continuations) of $G$-functions (see \cite[\S 2.2]{gvalues}). This ring is very important because, conjecturally, $\GG=\calP[1/\pi]$ where $\calP$ is the ring of periods (in the sense of Kontsevich-Zagier \cite{KZ}, say).
\item ${\bf S}$ is the $\GG$-module generated by all the values of derivatives of the Gamma function at rational points (see \cite[\S 2]{ateo}). It is also the $\GG[\gamma]$-module generated by all the values of $\Gamma$ at rational points, where $\gamma$ is Euler's constant; it is a ring.
\item $ \DD$ is the set of all values 
$\fgo_\theta(1)$ where $\fgo$ is an \antie-function (here $\theta=0$ if it is not an anti-Stokes direction of $\fgo$, and $\theta>0$ is very small otherwise.) Then $ \DD$ is a countable sub-ring of $\mathbb C$ that contains $\Qbar$ (see \cite{ssmixte}, and \cite[\S 4.3]{fuchsien} where it is denoted by {\bf \tantie}).
\item $\VV$ is the ${\bf S}$-module generated by $\E$; it is a ring (see \cite{fuchsien}). It consists in all finite sums of products $G(1)f(1)\Gamma(r)\gamma^j$ where $G$ is the analytic continuation of a $G$-function, $f$ is an $E$-function, $r\in\Q\setminus\Z_{\leq 0}$, $j\in\N$ (see \cite{gvalues} and \cite{ateo}). It contains all values at algebraic points of Airy's oscillating integral, and that of Bessel's functions $J_\alpha(z)$ with $\alpha\in\Q$. Conjecturally, $\VV$ is the ring generated by exponential periods and $1/\pi$.
\end{itemize}

We have proved \cite[Theorem 3]{fuchsien} that $\VV=\sum_{\rho\in\Qbar} e^\rho \VV_0$ where $\VV_0$ is the ${\bf S}$-module generated by $\DD$. Since $\EE\subset\VV$, any element $\xi\in\EE$ has a decomposition in the sum $\sum_{\rho\in\Qbar} e^\rho \VV_0$. The construction of \S \ref{secmorph} with $\theta=0$ gives such a decomposition: letting $\Phi_0(\xi)=\sum_{\rho\in\Qbar}\lambda_\rho e^{\rho x}$ we have $\xi = \sum_{\rho\in\Qbar}\lambda_\rho e^{\rho}$, with $\lambda_\rho\in\VV_0$ for any $\rho$ using \cite[Theorem 2]{ateo}.

\bigskip

An interesting question would be to describe explicitly the image of $\Phi_0$: it 
 is contained in the set of $\sum_{\rho\in\Qbar}\lambda_\rho e^{\rho x}$ where the $\lambda_\rho$ belong to $\VV_0$, but is there equality?

\bigskip

Another question would be to study factorization in the ring $\EE$. To begin with, let us recall how factorization works in the ring $\mathcal{R}$ of exponential polynomials with constant complex coefficients and algebraic phases (the situation would be the same with algebraic coefficients and/or complex phases). Units of $\mathcal{R}$ are elements of the form $\lambda e^{\alpha x}$ with $\lambda\in\C^\times$ and $\alpha\in\Qbar$. Irreducible elements are those non-units that cannot be written as a product of two non-units. Following Ritt, simple elements are the non-zero and non-units of the form $ e^{\alpha x} P( e^{\beta x})$
with $\alpha,\beta\in\Qbar$ and $P\in\C[X]$. The {\em support} of such a simple exponential polynomial is the $1$-dimensional $\Q$-vector space spanned by~$\beta$.

With these notations, any non-zero $f\in\calR$ can be written in a unique way (up to units) as a product of irreducible elements, and simple ones with pairwise distinct supports. This result is due to Ritt~\cite{ritt1927}; it was generalized by MacColl~\cite{maccoll} to the ring of exponential polynomials with polynomial coefficients. A conjectural analogue for the ring $\calE$ of $E$-functions appears in \cite{ezeros}.

We expect factorization properties of the ring $\EE$ to be similar. We suggest to call 
{\em simple} a non-zero and non-unit element of the form 
$$ e^\alpha P( e^{\beta }) \mbox{ with $\alpha,\beta \in\Qbar$ and $P\in\Qbar[X]$},$$
and to define its {\em support} as the $\Q$-vector space spanned by $\beta$. Is it true that any non-zero element of $\EE$ can be written in a unique way (up to units) as a product of irreducible elements, and simple ones with pairwise distinct supports?

\noindent St\'ephane Fischler, Universit\'e Paris-Saclay, CNRS, Laboratoire de math\'ematiques d'Orsay, 91405 Orsay, France.

\medskip

\noindent Tanguy Rivoal, Universit\'e Grenoble Alpes, CNRS, Institut Fourier, CS 40700, 38058 Grenoble cedex 9, France.

\bigskip

\noindent Keywords: $E$-functions, Special value,
Asymptotic expansion, Borel-Laplace summation

\bigskip

\noindent MSC 2020: 11J91, 33E30, 34M40, 44A10


\begin{thebibliography}{10}

\bibitem{andre} Y. Andr\'e, S\'eries Gevrey de type arithm\'etique I. Th\'eor\`emes de puret\'e et de dualit\'e, {\em Ann. of Math.} {\bf 151}.2 (2000), 705--740.

\bibitem{andreasens} Y. Andr\'e, Solution algebras of differential equations and quasi-homogeneous varieties: a new differential Galois correspondence, {\em Ann. Sci. \'Ec. Norm. Sup\'er.} {\bf 47}.2 (2014), 449–-467.

\bibitem{beukers} F.~Beukers, A refined version of the Siegel-Shidlovskii theorem, {\em Ann. of Math.} {\bf 163}.1 (2006), 369--379.

\bibitem{DLMF} NIST Digital Library of Mathematical Functions. https://dlmf.nist.gov/, Release 1.2.8 of 2026-09-15. F. W. J. Olver, A. B. Olde Daalhuis, D. W. Lozier, B. I. Schneider, R. F. Boisvert, C. W. Clark, B. R. Miller, B. V. Saunders, H. S. Cohl, and M. A. McClain, eds.

\bibitem{gvalues} S. Fischler, T. Rivoal, On the values of $G$-functions, {\em Commentarii Math. Helv.} {\bf 89}.2 (2014), 313--341.

\bibitem{ateo} S. Fischler, T. Rivoal, Arithmetic theory of $E$-operators, {\em J. \'Ec. Polytech. -- Math\'e\-ma\-ti\-ques} {\bf 3} (2016), 31--65. 

\bibitem{fuchsien} S. Fischler, T. Rivoal, Microsolutions of differential operators and values of arithmetic Gevrey series, {\em American J. of Math.}
{\bf 140}.2 (2018), 317--348.

\bibitem{eliouville} S. Fischler, T. Rivoal, Values of $E$-functions are not Liouville numbers, {\em J. \'Ec. Polytech. -- Ma\-th\'e\-matiques} {\bf 11} (2024), 1--18.

\bibitem{ssmixte} S. Fischler, T. Rivoal, Relations between values of arithmetic Gevrey series, 
and applications to values of the Gamma function, {\em Journal of Number Theory} {\bf 261} (2024), 36--54.

\bibitem{ezeros} S. Fischler, T. Rivoal, 
Zeros of $E$-functions and of exponential polynomials defined over $\Qbar$, {\em Rend. Circ. Mat. Palermo, II. Ser} {\bf 75} (2026), article \# 158, 21 pages.

\bibitem{muegaledeux} S. Fischler, T. Rivoal, Rational approximations to values of $E$-functions, preprint arXiv 2312.12043, 32 pages, submitted.

\bibitem{Fresannotes} J. Fresán, Lectures on arithmetic Gevrey series, course notes, 
9th November 2023, available at {\tt http://javier.fresan.perso.math.cnrs.fr/nachdiplom.html}

\bibitem{FJ} J. Fresán, P.~Jossen, Exponential motives, Preliminary version available at {\tt http://javier.fresan.perso.math.cnrs.fr/expmot.pdf}

\bibitem{KZ} M. Kontsevich, D. Zagier, Periods, in: 
Mathematics Unlimited -- 2001 and beyond, Springer, 2001, 771--808. 

\bibitem{lepetit} G. Lepetit, Le th\'eor\`eme d'Andr\'e-Chudnovsky-Katz au sens large, {\em North-West. Eur. J. Math.} {\bf 7} (2021), 83--149.

\bibitem{maccoll} L. A. MacColl, A factorization theory for polynomials in $x$ and in functions $e^{\alpha x}$, {\em Bull. Amer. Math. Soc.} {\bf 41}.2 (1935), 104--109. 

\bibitem{Ramis} J. P. Ramis, {\em S\'eries Divergentes et Th\'eories Asymptotiques}, Panoramas et Synth\`eses {\bf 21}
(1993), Soc. Math. France, Paris.

\bibitem{ritt1927} J. F. Ritt, A factorization theory for functions $\sum_{i=1}^n a_i e^{\alpha_i x}$, {\em Trans. Amer. Math. Soc.} {\bf 29} (1927), 584--596. 


\bibitem{XUPS} T. Rivoal, Les $E$-fonctions et $G$-fonctions de Siegel, in: Périodes et transcendance, Journées X-UPS 2019, Éditions de l'École polytechnique, 2024, 189--284.

\bibitem{RiRopreprint} T. Rivoal, J. Roques, $E$-functions of order 2 and units of $E$-values, preprint available at {\tt http://rivoal.perso.math.cnrs.fr/articles/unitse.pdf}, 2015, 16 pages.

\bibitem{siegel} C. Siegel, \"Uber einige Anwendungen diophantischer Approximationen, vol. 1 S. {\em Abhandlungen Akad.}, Berlin, 1929.


\end{thebibliography}
\end{document}